\newtheorem{proposition}{Proposition}
\newtheorem{notation}{Notation}
\newtheorem{remark}{Remark}
\newcommand{\Real}{\mathbb{R}}
\newcommand{\Ze}{\mathbb{Z}}
\newcommand{\conv}{\mathrm{conv}}
\def\nni{{non-negative instances}}
\def\Nni{{Non-negative instances}}
\def\msi{{mixed-signs instances}}
\def\Msi{{Mixed-signs instances}}
\title{
Solving sparse separable bilinear programs using lifted bilinear cover inequalities
}
\author{
Xiaoyi Gu\thanks{xiaoyigu@gatech.edu, H. Milton  Stewart School of Industrial \& Systems Engineering, Georgia Institute of Technology, Atlanta, GA 30332.},
Santanu S. Dey\thanks{santanu.dey@isye.gatech.edu, H. Milton  Stewart School of Industrial \& Systems Engineering, Georgia Institute of Technology, Atlanta, GA 30332.}, 
Jean-Philippe P. Richard\thanks{jrichar@umn.edu, Department of Industrial and Systems Engineering, University of Minnesota.} 
} 
\date{}
\begin{document}

\maketitle
\begin{abstract} 
Recently, in \cite{gu2022lifting}, we proposed a class of inequalities called \textit{lifted bilinear cover inequalities}, which are second-order cone representable convex inequalities, and are valid for a set described by a separable bilinear constraint together with bounds on variables. In this paper, we study the computational potential of these inequalities for separable bilinear optimization problems. 
We first prove that the semi-definite programming relaxation provides no benefit over the McCormick relaxation for such problems. 
We then design a simple randomized separation heuristic for lifted bilinear cover inequalities. 
In our computational experiments, we separate many rounds of these inequalities starting from McCormick's relaxation of instances where each constraint is a separable bilinear constraint set. 
We demonstrate that there is a significant improvement in the performance of a state-of-the-art global solver in terms of gap closed, when these inequalities are added at the root node compared to when they are not.
\end{abstract}

\section{Introduction.}

{Lifting} is a technique used to generate cutting planes for a set from a {seed inequality} valid for a restriction of this set.
Lifting was first studied in the context of mixed integer linear programming (MILP); see~\cite{richard2010lifting} for a review.  
Cover inequalities (\cite{wolsey1975faces,padberg1975note,balas1975facets,balas1978facets,hammer1975facet}) are valid for 0-1 knapsack sets with coefficients satisfying the minimal cover property. 
The inequalities obtained after lifting cover inequalities are called \textit{lifted cover inequalities}; see ~\cite{hammer1975facet,zemel1978lifting,wolsey1976facets,wolsey1977valid,gu1999lifted,gu2000sequence}.
They form a family of strong valid inequalities for general 0-1 knapsack sets that is very important for state-of-the-art MILP solvers; see ~\cite{bixby2007progress}. 

Inspired by the success of these inequalities we introduced in~\cite{gu2022lifting} a class of inequalities we call \textit{lifted bilinear cover inequalities} for separable bilinear constraints.  
These second-order cone representable (SOCR) convex inequalities are derived using lifting and are valid for a set described by a separable bilinear constraint together with bounds on variables. 
In Section~\ref{sec:background} below, we give a detailed description of lifted bilinear cover inequalities and the separable bilinear sets they are valid for. 

In this paper, we study the computational potential of these inequalities. 
We are inspired by the classical paper of~\cite{crowder1983solving}. 
This paper is one of the first to highlight the computational importance of lifted cover inequalities in MILP. 
Specifically, this paper considered sparse instances -- since most practical instances are sparse. 
Here, sparsity means that the support of each constraint is significantly smaller than the number of variables in the entire problem; see~\cite{dey2015approximating,dey2018analysis} for discussions and results regarding sparsity of problems and cutting-planes. 
Similarly, we generate sparse separable bilinear instances to test lifted bilinear cover inequalities. 
We numerically show that a significant improvement in the performance (in terms of gap closed) of a global solver can be observed when these inequalities are added at the root node compared to when they are not.

\subsection{Lifted bilinear cover inequalities for separable bilinear programs.}\label{sec:background}
For a positive integer $n$, we use the notation $[n]$ to describe the set $\{1, \dots, n\}$.

In \cite{gu2022lifting}, we derive inequalities that can be applied to improve convex relaxations of the feasible region of separable bilinear programs, which we call inequalities \textit{lifted bilinear cover inequalities}.

\begin{restatable}{definition}{DefBilinearProblem}
A separable bilinear program is an optimization problem of the form
\begin{equation}
\label{eq:bilinearproblem}
\begin{aligned}
    \textup{min}\quad & \sum_{i \in [n]} \left( c_i^xx_i + c_i^y y_i\right)\\
    \textup{s.t.}\quad & \sum_{i \in [n]} a_i^j x_iy_i \geq d_j,\quad && \forall j \in [m],\\
    & x_i, y_i \in [0, 1], \quad && \forall i \in [n].
\end{aligned}
\end{equation}
\end{restatable}

Each bilinear constraint in \eqref{eq:bilinearproblem} is in the form of a \textit{separable bilinear set}, defined as follows.

\begin{restatable}{definition}{DefSeparable}
\label{def:separable}
A set $S$  is called a \textit{separable bilinear set} if it is of the form
\begin{eqnarray*}
S:= \left\{ \ (x , y) \in [0, 1]^n \times [0, 1]^n \ \Bigm|\  \sum_{i \in [n]} a_i x_iy_i \geq d \ \right\},
\end{eqnarray*}
where $d\in \Real$ and $a_i \in \Real$ for all $i \in [n]$.
\end{restatable}

For each $i \in [n]$, variables $x_i$ and $y_i$ in $S$  appear in only one term in the left-hand-side. 
The convex hull of the set $S$ has been studied in~\cite{dey2019new,santana2020convex} and the convex hull of the relaxation of $S$ obtained by dropping the upper bounds on the variables is presented in~\cite{tawarmalani2010strong} for the case where all coefficients $a_i$ are non-negative. 
These convex hull results (especially the exact ones presented in~\cite{dey2019new} and \cite{santana2020convex}) have limited computational usefulness, since the description of the convex hull is exponential in the number of variables. 
This motivates the derivation of families of cutting planes for $S$. 

In \cite{gu2022lifting}
we proposed using \textit{minimal covers} to generate seed inequalities we call \textit{bilinear cover inequalities} and then perform lifting to obtain valid inequalities for separable bilinear sets. 
We next briefly introduce the main results of \cite{gu2022lifting}. 
We start by introducing the notions of \textit{minimal cover} and \textit{minimal cover yielding partition}.

\begin{restatable}{definition}{DefMinimal}
\label{def:minimal}
A set $\{a_i\in \Real \ |\ i\in [k]\}$, with $k \in \Ze_{+}$ being a positive integer, is said to form a \textit{minimal cover} of $d\in \Real$ if
\begin{enumerate}[label={(\roman*)},align=left]
    \item $a_i > 0$ for all $i \in [k]$, $d >0$,
    \item $\sum_{i \in [k]} a_i > d$,
    \item $\sum_{i \in K} a_i \leq d$, $\forall K \subsetneq [k]$.
\end{enumerate}
For a separable  bilinear set $S$,  
we say that a partition $\Lambda = \{I, J_0, J_1\}$ of $[n]$, where $I\neq \emptyset$, is a \textit{minimal cover yielding partition} if: $\{a_i\ |\  i \in I\}$ forms a minimal cover of $d^\Lambda:= d - \sum_{i\in J_1} a_i$. 
For a minimal cover yielding partition, we let $J_0^+ := \{ i\in J_0\ |\ a_i>0 \}$ and $J_0^- := \{ i\in J_0\ |\ a_i < 0 \}$. 
We define $J_1^+$ and $J_1^-$ similarly.
\end{restatable}

\begin{remark}
When $k \geq 2$, conditions (ii) and (iii) in the definition of minimal cover imply condition (i). For example,  if $a_i \leq 0$ for some $i\in [k]$, then (ii) implies $\sum_{j \in [k] \setminus \{i\}} a_j > d$, contradicting  (iii).  Now (iii) together with $a_i >0$ for $i \in [k]$ implies that $d >0$. 
\end{remark}

\begin{notation}\label{not:1}
Assuming that $\{a_i\ | i\in [k] \}$ forms a minimal cover of ${d^\Lambda}$, we use 
\begin{enumerate}
\item $\Delta:= \sum_{i \in [k]} a_i - {d^\Lambda}$, 
\item $d_i:= {d^\Lambda} - \sum_{j\in [k]\backslash \{i\}} a_j = a_i - \Delta$, 
\item $I^> := \{i \in [k] \ |\  a_i > \Delta\}$, 
\item when $I^> \neq \emptyset$, let $\i0$ denote any index in $I^>$ such that $a_{\i0} = \min \{ a_i \ |\  i \in I^> \}$.
{We say that $i_0$ does not exist when $I^>=\emptyset$.}
\end{enumerate}
\end{notation}

The process to generate lifted bilinear cover inequalities for $S$ from the minimal cover yielding partition is to 
(i) first fix $x_i = y_i=0$ for $i\in J_0$ and $x_i=y_i=1$ for $i\in J_1$, 
(ii) generate valid seed inequality for the restricted region, and 
(iii) finally lift the seed inequality to obtain a valid cut for $S$.

The following result from \cite{gu2022lifting} establishes the existence of minimal cover yielding partitions in many instances of $S$.

\begin{restatable}{theorem}{ThmMinimal}
\label{thm:minimal}
For a nonempty separable bilinear set $S$, either there exists at least one minimal cover yielding partition or $\conv(S)$ is polyhedral.
\end{restatable}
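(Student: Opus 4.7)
My plan is to prove the contrapositive: assume $S$ is nonempty and that no minimal cover yielding partition exists, and show that $\conv(S)$ is polyhedral.

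The first step is a combinatorial reformulation. Setting $P := \{i \in [n] : a_i > 0\}$ and $N := \{i \in [n] : a_i < 0\}$, the condition $a_i > 0$ for $i \in I$ forces $I \subseteq P$. Unpacking the definition of MCYP, I would argue that an MCYP exists if and only if there is $J \subseteq N$ with $\sum_{i \in J} |a_i| \in (-d, \sum_{i \in P} a_i - d)$. For the forward direction, I take $J := J_1 \cap N$ and rewrite $d^\Lambda > 0$ and $\sum_{i \in I} a_i > d^\Lambda$ using $I \cup (J_1 \cap P) \subseteq P$ to obtain both bounds. For the converse, given such a $J$, I set $J_1 := J$, so that $d^\Lambda = d + \sum_{i \in J} |a_i| \in (0, \sum_{i \in P} a_i)$, and then greedily reduce $P$ to a minimal cover of $d^\Lambda$.

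The second step is a case analysis on when no such $J$ exists. The easy cases are: (a) the \emph{redundant case} $d \leq \sum_{i \in N} a_i$, in which $\sum_i a_i x_i y_i \geq \sum_{i \in N} a_i \geq d$ holds on all of $[0,1]^{2n}$, so $S = [0,1]^{2n}$ is trivially polyhedral; and (b) the \emph{extremal case} $\sum_{i \in P} a_i \leq d$, where nonemptiness forces the equality $\sum_{i \in P} a_i = d$, pinning $x_i = y_i = 1$ for $i \in P$ and $x_i y_i = 0$ for $i \in N$, so $\conv(S)$ is a Cartesian product of triangles $\{(x,y) \in [0,1]^2 : x + y \leq 1\}$ (the convex hull of $\{(x,y) \in [0,1]^2 : xy = 0\}$) and unit intervals. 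Both cases yield polyhedral convex hulls immediately.

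The principal challenge is the remaining \emph{gap case:} $d \leq 0 < \sum_{i \in P} a_i$, $\sum_{i \in N} a_i < d$, and the subset sums of $\{|a_i| : i \in N\}$ skip the open interval $(-d, \sum_{i \in P} a_i - d)$. Here each $i \in N$ has $|a_i| \leq -d$ (a ``small'' coefficient) or $|a_i| \geq \sum_{i \in P} a_i - d$ (a ``big'' coefficient), partitioning $N$ into $N^s$ and $N^b$. My approach is to prove that $\conv(S)$ equals the convex hull of the polyhedral face-sets obtained by fixing $x_i y_i \in \{0, 1\}$ for every $i \in N$ in a way consistent with the gap (i.e., never landing the total negative contribution in the forbidden interval), combined with the McCormick envelope applied to the positive coordinates. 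The hardest step will be showing that no nonlinear extreme point of $\conv(S)$ can survive: any putative extreme point on the bilinear surface $\sum_i a_i x_i y_i = d$ with some $x_i y_i \in (0, 1)$ for $i \in N$ should admit a convex decomposition into points with $x_i y_i \in \{0, 1\}$ for all $i \in N$. I expect this to proceed by induction on the number of fractional bilinear products, the gap condition providing exactly the slack needed to round each $x_i y_i$ to the appropriate endpoint without violating the constraint.
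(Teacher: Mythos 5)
This paper does not actually prove Theorem~\ref{thm:minimal}; it is imported from \cite{gu2022lifting}, so I am judging your proposal on its own terms. Your first two steps are sound: the equivalence ``an MCYP exists iff some $J\subseteq N$ has $\sum_{i\in J}|a_i|\in(-d,\sum_{i\in P}a_i-d)$'' checks out in both directions, and the redundant and extremal cases are handled correctly (you do omit the subcase $P=\emptyset$, $\sum_{i\in N}a_i<d<0$, a pure packing set, but that is easily repaired). The genuine gap is in the case you yourself flag as the principal challenge. The structural claim that $\conv(S)$ equals the convex hull of the faces obtained by fixing $x_iy_i\in\{0,1\}$ for all $i\in N$ is false. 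Take $n=2$, $a_1=1$, $a_2=-10$, $d=-5$. There is no MCYP (the only candidate $I$ is $\{1\}$, and $d^\Lambda\in\{-5,5\}$ never satisfies $0<d^\Lambda<a_1$), and this is squarely your gap case with forbidden interval $(5,6)$ and subset sums $\{0,10\}$. The piece with $x_2y_2=1$ forces $x_1y_1\geq 5$ and is empty; the piece with $x_2y_2=0$ has convex hull contained in $\{x_2+y_2\leq 1\}$. Yet $(1,1,1,0.6)\in S$ with $x_2+y_2=1.6$. Consequently your induction cannot go through: rounding $x_2y_2$ up sends the point to the infeasible $(1,1,1,1)$, and no convex combination of the admissible pieces recovers it. The underlying issue is that $\conv(S)$ in the gap case has extreme points at which a negative-coefficient pair sits on an \emph{edge} of its square at a fractional point such as $(1,0.6)$, not at a vertex, so no decomposition into $\{0,1\}$-valued products can succeed.

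The repair is to abandon the face decomposition and analyze extreme points of $\conv(S)$ directly. Standard exchange perturbations (move $(x_i,y_i)$ along $\pm\epsilon(x_i,-y_i)$, or trade two fractional coordinates against each other at rates $1/(a_iy_i)$ so the linear term of the constraint cancels and the quadratic term is nonnegative) show that at any extreme point every pair with $a_i<0$ lies on the boundary of its square, at most one pair in total carries a fractional coordinate, and if that coordinate lies on an edge it is pinned by tightness of the constraint --- yielding only finitely many such points. The only configuration producing infinitely many extreme points is a pair $j$ with $a_j>0$ sitting in the open square on the hyperbola $a_jx_jy_j=d-\sum_{i\in T}a_i$ with all other pairs at vertices and $T=\{i\neq j:x_iy_i=1\}$; but then $0<d-\sum_{i\in T}a_i<a_j$, so $I=\{j\}$, $J_1=T$ is a (singleton) minimal cover yielding partition. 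Hence no MCYP implies finitely many extreme points, and $\conv(S)$, being the convex hull of a compact set with finitely many extreme points, is a polytope. This argument also absorbs your cases (a), (b) and the $P=\emptyset$ case uniformly, so the case analysis becomes unnecessary.
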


The seed inequality used for lifting is presented in Theorem~\ref{thm:valid}, which also discusses its strength.

\begin{restatable}{theorem}{ThmValid}
\label{thm:valid}
For a separable  bilinear set $S$ as in Definition~\ref{def:separable}
where $\{a_i\ |\ i\in [n]\}$ forms a minimal cover of $d$, the following \textit{bilinear cover inequality} is valid:
\begin{eqnarray} \label{eq:bilincoverineq}
\sum_{i \in [n]} \frac{\sqrt{a_i}}{ \sqrt{a_i}-\sqrt{d_i}}\left( \sqrt{x_iy_i} - 1\right) \geq -1.
\end{eqnarray}
Further, the set $R:= \{ (x,y) \in \Real^{2n}_{+} \ |\  (\ref{eq:bilincoverineq})\}$
is such that
$ (4\cdot R) \cap [0, \ 1]^{2n} \subseteq \textup{conv}(S) \subseteq R \cap [0, \ 1]^{2n}.$
\end{restatable}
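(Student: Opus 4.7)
The plan is to first establish validity of the bilinear cover inequality on $S$, whence the outer inclusion $\conv(S)\subseteq R\cap[0,1]^{2n}$ is immediate by convexity, and then tackle the inner inclusion $(4\cdot R)\cap[0,1]^{2n}\subseteq \conv(S)$ via an explicit convex decomposition.

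For validity, I would apply two successive changes of variables. First, $t_i:=\sqrt{x_iy_i}\in[0,1]$ maps any $(x,y)\in S$ to a vector $t$ with $\sum_i a_i t_i^2\geq d$, turning the cut into $\sum_i c_i(1-t_i)\leq 1$ with $c_i=\sqrt{a_i}/(\sqrt{a_i}-\sqrt{d_i})$. Second, $s_i:=1-t_i^2\in[0,1]$ linearizes the feasible set to the polytope $\Pi:=\{s\in[0,1]^n:\sum_i a_i s_i\leq \Delta\}$ and rewrites the cut as $\sum_i c_i(1-\sqrt{1-s_i})\leq 1$. Since $s\mapsto 1-\sqrt{1-s}$ is convex, the left-hand side is a convex function of $s$, so its maximum over $\Pi$ is achieved at a vertex. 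By the minimal-cover property $a_i\geq \Delta$ (Notation~\ref{not:1}), the vertices of $\Pi$ are precisely $s=0$ (objective value $0$) and $s=(\Delta/a_j)e_j$ for $j\in[n]$; at each such vertex, $c_j(1-\sqrt{d_j/a_j})=c_j(\sqrt{a_j}-\sqrt{d_j})/\sqrt{a_j}=1$ by direct substitution using $d_j=a_j-\Delta$. Hence the maximum is exactly $1$ and the cut holds on $S$.

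The outer inclusion then follows because the cut defines a convex, in fact SOCR, region containing $S$. For the inner inclusion, I would decompose any $(x,y)\in (4\cdot R)\cap[0,1]^{2n}$ as $(x,y)=\lambda_0\ONE+\sum_i\lambda_i p^{(i)}$, where each $p^{(i)}\in S$ agrees with the all-ones vector outside coordinate $i$ and has $(x^{(i)}_i,y^{(i)}_i)=(\alpha_i,\beta_i)$ with $\alpha_i\beta_i\geq d_i/a_i$. Matching coordinates forces $\lambda_i(1-\alpha_i)=1-x_i$ and $\lambda_i(1-\beta_i)=1-y_i$, and enforcing $\alpha_i\beta_i\geq d_i/a_i$ becomes a quadratic inequality in $\lambda_i$ whose smallest admissible root is bounded above by a multiple of $(1-x_i)+(1-y_i)$. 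Combined with the AM--GM-type estimate $1-\sqrt{x_iy_i}\geq \tfrac{1}{2}((1-x_i)+(1-y_i))$ and the $4\cdot R$ hypothesis $\sum_i c_i\sqrt{x_iy_i}\geq 4(\sum_i c_i-1)$, this should yield $\sum_i\lambda_i\leq 1$, with the factor $4$ appearing from squaring the AM--GM bound when translating between $\sqrt{x_iy_i}$ and $x_iy_i$.

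The main obstacle will be the inner inclusion: the decomposition has to be constructed explicitly, and propagating the factor-$4$ slack in the hypothesis through the AM--GM estimate and the quadratic-root bound to conclude $\sum_i\lambda_i\leq 1$ is the delicate step. Validity and the outer inclusion, by contrast, collapse to a single vertex enumeration on the linearized polytope $\Pi$ together with the convexity of $s\mapsto 1-\sqrt{1-s}$, and so should be essentially mechanical.
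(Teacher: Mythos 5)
The paper itself offers no proof of Theorem~\ref{thm:valid} --- it is restated from \cite{gu2022lifting} without argument --- so I can only judge your proposal on its merits. Your validity argument is correct and complete: after the substitutions $t_i=\sqrt{x_iy_i}$ and $s_i=1-t_i^2$, the feasible region becomes $\Pi=\{s\in[0,1]^n:\sum_i a_is_i\le\Delta\}$, the left-hand side becomes a separable convex function of $s$, and since $a_i\ge\Delta$ for all $i$ (from condition (iii) of Definition~\ref{def:minimal}) the only vertices of $\Pi$ are $0$ and $(\Delta/a_j)e_j$, where the function evaluates to $0$ and to $c_j(1-\sqrt{d_j/a_j})=1$ respectively. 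The outer inclusion then follows from concavity of $\sqrt{x_iy_i}$ on $\Real^2_+$ and $c_i>0$. For the inner inclusion, your decomposition architecture is sound, and I can confirm the estimates close: the larger root of the quadratic satisfies $\lambda_i\le\frac{a_i}{\Delta}\bigl((1-x_i)+(1-y_i)\bigr)$, it dominates $\max\{1-x_i,1-y_i\}$ (so $\alpha_i,\beta_i\in[0,1]$), and combining $\frac{a_i}{\Delta}\le c_i$ with $(1-x_i)+(1-y_i)\le 2(1-\sqrt{x_iy_i})$ gives $\sum_i\lambda_i\le 2\sum_i c_i(1-\sqrt{x_iy_i})$.

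The one genuine problem is your reading of the hypothesis. You take $(x,y)\in 4\cdot R$ to mean $\sum_i c_i\sqrt{x_iy_i}\ge 4(\sum_i c_i-1)$, i.e.\ $4\cdot R=\{4z:z\in R\}$. But each $c_i=\sqrt{a_i}/(\sqrt{a_i}-\sqrt{d_i})\ge 1$, so for $(x,y)\in[0,1]^{2n}$ the left-hand side is at most $\sum_i c_i$ while the right-hand side is $\sum_i c_i+3(\sum_i c_i-\tfrac43)\ge\sum_i c_i+3(n-\tfrac43)$, which exceeds $\sum_i c_i$ whenever $n\ge 2$; under your reading $(4\cdot R)\cap[0,1]^{2n}$ is empty and the inclusion is vacuous, so your entire decomposition argument proves nothing. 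You must instead use the convention of \cite{gu2022lifting}, under which the scaling tightens the right-hand side of \eqref{eq:bilincoverineq}, i.e.\ the hypothesis is $\sum_i c_i(\sqrt{x_iy_i}-1)\ge-\tfrac14$, equivalently $\sum_i c_i(1-\sqrt{x_iy_i})\le\tfrac14$. With that hypothesis your chain of estimates yields $\sum_i\lambda_i\le 2\cdot\tfrac14=\tfrac12\le 1$ and the convex combination $(x,y)=(1-\sum_i\lambda_i)\ONE+\sum_i\lambda_i p^{(i)}$ certifies membership in $\conv(S)$ (note $\ONE\in S$ since $\Delta>0$). Pin down this definition before writing the argument up; as it stands, the stated hypothesis and the intended conclusion do not connect.
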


With the seed bilinear cover inequality, the following lifted bilinear cover inequality was proposed, which is valid for the whole separable bilinear set.

\begin{restatable}{theorem}{ThmLifted}
\label{thm:lifted}
Consider a separable  bilinear set $S$ as in Definition~\ref{def:separable}. 
Let $\Lambda = \{I, J_0, J_1\}$ be a minimal cover yielding partition, let $\Delta, a_{\i0}, d_i, l_{+}, l_{-}$ be defined as in {Notation~\ref{not:1}}, and let $J_0^+$, $J_0^-$, $J_1^+$, $J_1^-$ be as in Definition~\ref{def:minimal}. 
Then inequality 
\begin{eqnarray}
\label{eq:liftedbilinearcoverinequality}
\sum_{i \in I} \frac{\sqrt{a_i}}{ \sqrt{a_i}-\sqrt{d_i}}\left( \sqrt{x_iy_i} - 1\right) + \sum_{i\notin I} \gamma_i (x_i, y_i) \geq -1,
\end{eqnarray}
is valid for $S$ where 
$\gamma_i: \Real^2 \rightarrow \Real$ for $i \in [n]\setminus I$ are the concave functions:
\begin{enumerate}[label={(\roman*)},align=left]
    \item $\gamma_i(x,y) = l_+a_i\min\{x,y\}$ for $i\in J_0^+$;
    \item $\gamma_i(x,y) = -l_+a_i\min\{2-x-y, 1\}$ for $i\in J_1^-$;
    \item $\gamma_i(x,y) = \min\{l_-a_i(x+y-1), l_+a_i(x+y-1)+l_+\Delta-1, 0\}$ for $i\in J_0^-$;
    \item 
    $\gamma_i(x,y) = \min\{\tilde{g}_i(x,y),\tilde{h}_i(x,y), g_i(x,y), h_i(x,y)\},$
    for $i\in J_1^+$ with $a_i\geq a_{\i0}$ when $I^>\neq\emptyset$, 
    and
    $\gamma_i(x,y) = \min\{\tilde{g}_i(x,y),\tilde{h}_i(x,y)\}$ in all other cases where 
    $i\in J_1^+$, 
    with
    \begin{align*}
        \tilde{g}_i(x,y)&=l_+a_i(\min\{x, y\}-1)+l_+\Delta-1\\
        \tilde{h}_i(x,y)&=l_-a_i(\min\{x, y\}-1)\\
        g_i(x,y)&= \sqrt{a_i - \Delta}\sqrt{a_i}l_+\sqrt{xy} -l_+(a_i-\Delta) -1 \\
        h_i(x,y)&= \frac{\sqrt{a_i}}{\sqrt{a_i} - \sqrt{d_i}}(\sqrt{xy} - 1)
    \end{align*}
\end{enumerate}
with $l_- = \frac{1}{\Delta}$ and
$l_+=\frac{\sqrt{a_{i_0}}+\sqrt{d_{i_0}}}{\Delta \sqrt{d_{i_0}}}$
if 
{$i_0$} exists and $l_+=\frac{1}{\Delta}$ otherwise.
\end{restatable}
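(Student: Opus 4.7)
The plan is to derive \eqref{eq:liftedbilinearcoverinequality} by lifting the seed bilinear cover inequality \eqref{eq:bilincoverineq} across the variables $(x_i, y_i)$ with $i \notin I$. Given the minimal cover yielding partition $\Lambda = \{I, J_0, J_1\}$, let $S_\Lambda \subseteq S$ be the slice obtained by fixing $x_i = y_i = 0$ for $i \in J_0$ and $x_i = y_i = 1$ for $i \in J_1$. Because $\{a_i : i \in I\}$ is a minimal cover of $d^\Lambda$ by definition of $\Lambda$, Theorem~\ref{thm:valid} applied with right-hand side $d^\Lambda$ certifies that the $i \in I$ portion of \eqref{eq:liftedbilinearcoverinequality} is valid on $S_\Lambda$. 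To promote this to a cut for all of $S$, it suffices to exhibit, for each $i \notin I$, a concave function $\gamma_i$ that vanishes at the slice value of $(x_i, y_i)$ and pointwise underestimates the additional slack that the seed inequality enjoys when $(x_i, y_i)$ moves away from that value.

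Formally, for each $i \notin I$ I would analyze the lifting function
\[
\phi_i(u, v) := \inf\left\{\, 1 + \sum_{j \in I}\frac{\sqrt{a_j}}{\sqrt{a_j} - \sqrt{d_j}}\bigl(\sqrt{x_j y_j} - 1\bigr) \,:\, (x_j, y_j)_{j \in I} \in P_\Lambda(u, v)\,\right\},
\]
where $P_\Lambda(u,v)$ is the projection of $S$ onto the $I$-coordinates after fixing $(x_i, y_i) = (u, v)$ and leaving the remaining non-$I$ coordinates at their slice values. Any concave function that vanishes at the slice value and lies below $\phi_i$ is then a valid lifting coefficient. Cases (i)--(iii) should be relatively direct. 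In (i) and (ii), the sign of $a_i$ ensures that displacing $(x_i, y_i)$ only relaxes the bilinear constraint, and a linear envelope obtained from the tightest supporting hyperplane of \eqref{eq:bilincoverineq} yields the form $l_+ a_i \min\{x, y\}$ (and its reflection through $(1,1)$). In (iii), moving away from $(0, 0)$ tightens the effective right-hand side by at most $a_i \min\{x, y\}$; bounding the induced change in slack against the cover gap $\Delta$ produces the three-piece minimum with slopes $l_- a_i$ and $l_+ a_i$.

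The principal obstacle is case (iv), $i \in J_1^+$, where shrinking $(x_i, y_i)$ below $(1,1)$ both tightens $d^\Lambda$ by as much as $a_i$ and can alter the minimal cover itself. I would split the unit square into two regimes. In a \emph{small-reduction} regime, $I$ remains a cover of the shifted right-hand side, and linearizing $\sqrt{x_j y_j}$ at endpoints tied to $\Delta$ and $a_i$ yields the piecewise-linear bounds $\tilde{g}_i$ and $\tilde{h}_i$. In a \emph{large-reduction} regime, the augmented set $(I \setminus \{i_0\}) \cup \{i\}$ is itself a minimal cover, which (when $i_0$ exists and $a_i \geq a_{i_0}$) permits a second invocation of Theorem~\ref{thm:valid} and delivers the square-root bounds $g_i$ and $h_i$. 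Concavity of each $\gamma_i$ then follows from writing it as a pointwise minimum of concave functions, using that $\sqrt{xy}$ is concave on $\Real_+^2$, which also preserves SOCR representability. The delicate part will be verifying that the two regimes patch correctly along their common boundary, and that the two definitions of $l_+$ (depending on whether $i_0$ exists) both yield supporting rather than separating inequalities at the tight points of $\phi_i$; this is where most of the technical bookkeeping is concentrated.
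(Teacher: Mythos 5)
This paper does not actually prove Theorem~\ref{thm:lifted}; the result is imported from \cite{gu2022lifting}, so there is no in-paper proof to compare against. Judged on its own terms, your proposal has the right skeleton (restrict to the slice determined by $\Lambda$, obtain the seed via Theorem~\ref{thm:valid} applied to $d^\Lambda$, then lift the blocks $i\notin I$), and your reading of case (iv) --- that $g_i$ and $h_i$ arise from treating index $i$ as if it were swapped into the cover in place of $i_0$ --- is the correct intuition. But there is a genuine gap at the center of the argument: you claim it ``suffices'' that each $\gamma_i$ be a concave bound on the single-block lifting function $\phi_i$, where $\phi_i$ is computed with \emph{all other} non-$I$ blocks frozen at their slice values. (As a side remark, the valid $\gamma_i$ are those lying \emph{above} $-\phi_i$ as you have defined $\phi_i$, not below it.) That condition only guarantees validity when exactly one block deviates from the slice. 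At a general point of $S$ many blocks deviate simultaneously: the effective right-hand side seen by the $I$-variables is $d^\Lambda-\sum_{i\notin I}\delta_i$ with $\delta_i$ the contribution of block $i$, and validity requires $\sum_{i\notin I}\gamma_i(x_i,y_i)\ge -\Phi\bigl(\sum_{i\notin I}\delta_i\bigr)$, where $\Phi$ is the lifting function of the seed inequality as a function of the \emph{aggregate} perturbation. Having $\gamma_i\ge-\Phi(\delta_i)$ for each $i$ separately does not imply this, because $\Phi$ need not be superadditive. The missing ingredient --- which is the main technical content of the derivation in \cite{gu2022lifting} --- is an explicit superadditive lower bound $\psi\le\Phi$ with $\psi(0)=0$, which makes the lifting sequence-independent via $\sum_i\bigl(-\psi(\delta_i)\bigr)\ge-\psi\bigl(\sum_i\delta_i\bigr)\ge-\Phi\bigl(\sum_i\delta_i\bigr)$. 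The piecewise structure of the $\gamma_i$'s and the specific slopes $l_-=1/\Delta$ and $l_+$ are precisely the data of that superadditive bound, not of the raw lifting function; without it, your argument establishes only the validity of each inequality obtained by lifting a single block.

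A secondary point you should make explicit rather than wave at: the perturbation $\delta_i$ is driven by $a_ix_iy_i$, which is neither convex nor concave in $(x_i,y_i)$, so converting a bound in the scalar $\delta_i$ into a concave function of $(x_i,y_i)$ forces a choice of envelope ($\min\{x,y\}$ versus $x+y-1$ versus $\sqrt{xy}$) whose admissible direction depends on the sign of $a_i$ and on the monotonicity of $\psi$ on the relevant range of $\delta_i$. This is where the four cases genuinely bifurcate, and it is not automatic that the concave envelope you pick still sits on the valid side of the lifting bound; each case needs its own verification.
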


The lifted bilinear cover inequality \eqref{eq:liftedbilinearcoverinequality} is convex and second-order cone representable (SOCR), making it easy to incorporate in relaxations, given the enormous progress in SOC solvers. 

\subsection{Main contributions.}
While the lifted bilinear cover inequality was derived in \cite{gu2022lifting}, its computational usefulness has not been evaluated.  In this paper, we utilize lifted bilinear cover inequalities in a computational study and illustrate their benefit. 

We expand on our main contributions next:
\begin{itemize}
\item 
Instead of using lifted bilinear cover inequalities, we could consider using the semidefinite programming (SDP) relaxation for strengthening the McCormick relaxation. In general, it is known that combining the McCormick relaxation with the SDP relaxation produces good bounds; see~\cite{burer2015gentle} for instance.  We show that, surprisingly, for the class of separable  instances, the SDP relaxation gives a bound equal to that of the McCormick relaxation. We shall see in the later sections that the lifted bilinear cover inequalities are able to close significant root gap over the McCormick relaxation -- thus showing that lifted bilinear cover inequalities are important in solving separable bilinear instances where SDP relaxations are of limited use.
\item 
We design a simple randomized separation heuristic for lifted bilinear cover inequalities. We use this separation heuristic to add many rounds of cuts starting from a natural relaxation of the problem that uses McCormick inequalities.  We then solve the instances using a commercial global solver with and without these inequalities added to the root node and compare the results obtained.
\item 
We discover that the inequalities separated using the simple heuristic provide a major performance boost to a commercial global solver in terms of overall gap closed on sparse test instances.
\end{itemize}

The rest of the paper is organized as follows. 
In Section~\ref{sec:SDP}, we compare the SDP and McCormick relaxations for separable  programs. 
In Section~\ref{sec:heur}, we present our separation heuristic. 
In Section~\ref{sec:setup}, we describe the procedure we use to generate synthetic test instances and discuss various other parameters regarding the use of the separation heuristic on these instances. 
In Section~\ref{sec:eval}, we present detailed results of our experiments. 
We give concluding remarks in Section~\ref{sec:conc}.

\section{Comparing SDP and McCormick relaxations of separable  programs.}\label{sec:SDP}

We show that, for separable bilinear programs, the relaxation obtained by adding the traditional SDP constraint to the McCormick relaxation gives the same bound as that given by just the McCormick relaxation. 
In fact, we prove this result in the following slightly more general seting. 
Consider the following quadratically constrained quadratic program (QCQP):
\begin{subequations}
\label{prob:bilin}
\begin{alignat}{5} 
&\textup{min}&\quad & \sum_{i \in [n]} a^0_{i} x_{i}y_i  + \sum_{i\in [n]} c^{x,0}_ix_i + \sum_{i \in [n]}c^{y,0}_iy_i\label{eq:obj}\\
&\textup{s.t.}&& \sum_{i \in [n]} a^j_{i} x_{i}y_i  + \sum_{i\in [n]} c^{x,j}_ix_i + \sum_{i \in [n]}c^{y,j}_iy_i \geq b_j, \quad \forall j \in [m], \label{eq:con}\\
&&& x, y \in [0, 1]^n.\label{eq:bnds}
\end{alignat}
\end{subequations}
We consider next two relaxations of \eqref{prob:bilin}.
{In both of these relaxations, we introduce variables $w_{i,j}$ to represent the products $x_i x_j$, variables $w_{i,n+j}$ and $w_{n+i,j}$ to represent the products $x_i y_j$ and $y_i x_j$, respectively, and variables $w_{n+i,n+j}$ to represent the products $y_i y_j$. 
Clearly, only variables $w_{i,n+i}$ are needed to relax \eqref{eq:con}. The first relaxation, which we call \textit{McCormick relaxation}, is obtained by using McCormick inequalities to approximate the relationships between variables $w$, $x$, and $y$: 
}
\begin{subequations}
\label{prob:mc}
\begin{alignat}{5} 
z_{Mc}:=\ &\textup{min}&\quad& \sum_{i \in [n]}a^0_{i} w_{i, n + i}  + \sum_{i\in [n]} c^{x,0}_ix_i + \sum_{i \in [n]}c^{y,0}_iy_i
& \quad & \\
&\textup{s.t.}&&\sum_{i \in [n]} a^j_{i} w_{i, n + i}  + \sum_{i\in [n]} c^{x,j}_ix_i + \sum_{i \in [n]}c^{y,j}_iy_i \geq b_j, && \forall j \in [m], \label{eq:eq1}\\
&&& x_i = u_i, && \forall i \in [n], \label{eq:eq2}\\
&&& y_i = u_{n + i}, &&  \forall i \in [n], \label{eq:eq3}\\
&&& u \in [0, 1]^{2n}, \label{eq:eq4}\\
&&& \textup{max}\{0, u_i + u_k  - 1\} \leq w_{i,k} \leq \textup{min}\{u_i , u_k\}, && \forall(i,k) \in [2n]\times[2n]. \label{eq:eq5}
\end{alignat}
\end{subequations}
{The second relaxation, which we call \textit{McCormick+SDP relaxation}, is obtained from the McCormick relaxation by including the traditional SDP relaxation of the property that $W$ is a rank-1 matrix:}
\begin{subequations}
\label{prob:sdp}
\begin{alignat}{5} 
z_{MS}:=\ &\textup{min}&\quad& \sum_{i \in [n]}a^0_{i} w_{i, n + i}  + \sum_{i\in [n]} c^{x,0}_ix_i + \sum_{i \in [n]}c^{y,0}_iy_i
 \\
&\textup{s.t.}&& (\ref{eq:eq1}), (\ref{eq:eq2}), (\ref{eq:eq3}), (\ref{eq:eq4}), (\ref{eq:eq5}) \\
&&& W:= \left[ \begin{array}{cc} 1 &\ u^{\top} \\ u &\ w \end{array} \right] \succeq 0,
\end{alignat}
\end{subequations}
where $W\succeq 0$ denotes that $W$ is positive semi-definite. 

\begin{proposition}
For the optimization problem \eqref{prob:bilin}, $z_{Mc} = z_{MS}$, {or both \eqref{prob:mc} and \eqref{prob:sdp} are infeasible}.
\end{proposition}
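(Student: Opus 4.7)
The plan is to prove the trivial direction $z_{MS} \ge z_{Mc}$ by noting that \eqref{prob:sdp} is obtained from \eqref{prob:mc} by adding the PSD constraint, and then to prove $z_{MS} \le z_{Mc}$ by showing that any feasible solution of \eqref{prob:mc} can be extended to a feasible solution of \eqref{prob:sdp} with the same objective value. The key observation is that the objective and each linking constraint \eqref{eq:eq1} only involve the entries $w_{i,n+i}$ for $i \in [n]$, so the remaining entries of $w$ are free subject to \eqref{eq:eq5} and the PSD requirement. By the Schur complement applied to $W$, the constraint $W \succeq 0$ is equivalent to $w \succeq u u^{\top}$.

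Given any feasible $(u^*, w^*)$ of \eqref{prob:mc}, I would redefine $w$ by setting $w_{ii} = u^*_i$ for $i \in [2n]$, keeping $w^*_{i,n+i}$ for $i \in [n]$, and setting $w_{i,j} = u^*_i u^*_j$ for every remaining pair $(i,j)$. Verifying \eqref{eq:eq5} for these new entries is routine, using $u^*_i u^*_j \leq \min\{u^*_i,u^*_j\}$ and $(1-u^*_i)(1-u^*_j) \geq 0$. After permuting coordinates so that $u_i$ and $u_{n+i}$ are adjacent, the residual matrix $w - u^*(u^*)^{\top}$ becomes block-diagonal with $n$ blocks of size $2 \times 2$, the $i$-th block being
\[
B_i = \begin{pmatrix} u^*_i(1-u^*_i) & w^*_{i,n+i} - u^*_i u^*_{n+i} \\ w^*_{i,n+i} - u^*_i u^*_{n+i} & u^*_{n+i}(1-u^*_{n+i}) \end{pmatrix},
\]
so the PSD requirement reduces to showing each $B_i \succeq 0$.

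The main technical step, and the only genuine hurdle, is then the scalar inequality: for any $\alpha,\beta \in [0,1]$ and $\gamma \in [\max\{0,\alpha+\beta-1\},\min\{\alpha,\beta\}]$, one has $\alpha(1-\alpha)\beta(1-\beta) \ge (\gamma-\alpha\beta)^2$. The function $\gamma \mapsto \alpha(1-\alpha)\beta(1-\beta) - (\gamma-\alpha\beta)^2$ is a concave downward parabola in $\gamma$, so it suffices to verify nonnegativity at the two endpoints of the McCormick interval; at each endpoint the difference factors into a product of nonnegative terms (for instance, at $\gamma = \alpha \le \beta$ it equals $\alpha(1-\beta)(\beta-\alpha)$), handling both extremes and hence all $\gamma$ in between by concavity. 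This construction shows that feasibility of \eqref{prob:mc} implies feasibility of \eqref{prob:sdp} with equal objective value, while the converse direction is immediate; hence either both problems are infeasible or $z_{Mc} = z_{MS}$.
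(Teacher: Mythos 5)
Your proof is correct, and it follows the paper's construction exactly up to the final step: like the paper, you extend the McCormick-optimal point by setting every unlinked product entry to $u_i^*u_j^*$ and each diagonal entry to $u_i^*$, note that the objective and constraints \eqref{eq:eq1} are untouched, and reduce the whole question to positive semidefiniteness of the residual obtained after subtracting the rank-one matrix built from $(1,u^*)$. Where you genuinely differ is in how that residual is certified PSD. The paper keeps the full $(2n+1)\times(2n+1)$ residual, observes that each row has a single nonzero off-diagonal entry, and invokes the Gershgorin circle theorem, which reduces the claim to the entrywise bound $u_i^*(1-u_i^*)\geq |w^*_{i,n+i}-u_i^*u_{n+i}^*|$, verified over the four possible extreme values of the McCormick interval. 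You instead pass through the Schur complement and a coordinate permutation to obtain $n$ decoupled $2\times 2$ blocks, so PSD-ness becomes the determinant inequality $\alpha(1-\alpha)\beta(1-\beta)\geq(\gamma-\alpha\beta)^2$ over the McCormick range of $\gamma$, which you check at the two endpoints and extend by concavity in $\gamma$. Both routes are elementary and both are sound; yours uses the exact PSD criterion for each block (the determinant condition is necessary and sufficient), whereas the paper's diagonal-dominance bound is a strictly stronger sufficient condition that happens to hold here, and your two-endpoint factorizations (e.g., $\alpha(1-\beta)(\beta-\alpha)$ at $\gamma=\alpha\leq\beta$) replace the paper's four-case analysis with a slightly tighter argument. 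The only cosmetic omission is that you do not explicitly note that the optima are attained (compactness of the feasible regions), which the paper dispatches in one sentence.
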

\begin{proof}
{Note that both \eqref{prob:mc} and \eqref{prob:sdp} are bounded, as all variables are bounded by $[0,1]$. Therefore, if either problem is feasible, then the feasible region is compact and thus a corresponding optimal solution must exist (objective function is linear). Since the feasible region of \eqref{prob:sdp} is contained in that of \eqref{prob:mc}, we only need to prove that if \eqref{prob:mc} is feasible, then \eqref{prob:sdp} is also feasible and that $z_{Mc} \leq z_{MS}$.} 

Let $(x^*, y^*, u^*, w^*)$ be an optimal solution corresponding to \eqref{prob:mc}.  

Then, construct the matrix $W^* \in \Real^{(2n +1) \times (2n+1)}$ as follows:
\begin{eqnarray*}
W^*_{ik} = \left\{\begin{array}{ll} 1 , &\quad \textup{if } i = k = 1,\\
u^*_{i -1} , &\quad \textup{if } k = 1, i \geq 2,\\
u^*_{k  -1} , &\quad \textup{if } i = 1, k \geq 2,\\
u^*_{i -1} , &\quad  \textup{if } i = k > 1,\\
u^*_{i-1}u^{*}_{k -1}, &\quad \textup{if } i \neq k,  i > 1, k > 1, |i - k| \neq n, \\
w^*_{\textup{min}\{i -1, k - 1\}, \textup{max}\{i -1, k - 1\}} , &\quad \textup{if } i \neq k,  i > 1, k > 1, |i - k| = n.
\end{array}\right.
\end{eqnarray*}

It is easy to verify that $(x^*, y^*, u^*, W^*)$ satisfy all the linear constraints corresponding to $z_{MS}$. 
It remains to show that $W^* \succeq 0$. 
This would show that there exists a feasible solution to \eqref{prob:sdp} with objective function value equal to that of $z_{Mc}$, completing the proof. 

Construct the matrix $G\in \Real^{(2n +1) \times (2n+1)}$ as follows:
\begin{eqnarray*}
G_{ik} = \left\{\begin{array}{ll} 1, &\quad \textup{if } i = k = 1,\\
u^*_{i -1} , &\quad \textup{if } k = 1,\\
u^*_{k  -1} , &\quad \textup{if }  i = 1,\\
u^*_{i-1}u^{*}_{k -1} , &\quad \textup{if } j \neq k,  i > 1, k > 1.
\end{array}\right.
\end{eqnarray*}
Clearly, {$G\succeq 0$} 
since $G = \left [\begin{array}{c} 1 \\ u^* \end{array}\right][1 \quad (u^*)^{\top}]$. 
So it is sufficient to show that {$W^* - G\succeq 0$.}  We compute that
\begin{eqnarray*}
(W^* - G)_{ik} = \left\{\begin{array}{ll} 0, &\quad \textup{if } i = 1 \textup{ or } k = 1,\\
u^*_{i -1}  - (u^*_{i -1})^2 , &\quad  \textup{if } i = k > 1,\\
0 , &\quad \textup{if } i \neq k,  i > 1, k > 1, |i - k| \neq n, \\
w^*_{\textup{min}\{i -1, k - 1\}, \textup{max}\{i -1, k - 1\}} - u^*_{i-1}u^{*}_{k -1}, &\quad \textup{if } i \neq k,  i > 1, k > 1, |i - k| = n. 
\end{array}\right.
\end{eqnarray*}
In words, the first row and column are all zero.  For each of the remaining rows, there is exactly one non-zero off-diagonal term. 
Therefore (without loss of generality) by the Gershgorin circle theorem (see~\cite{horn2012matrix}) it is sufficient to prove that
$$u^*_{i -1}  - (u^*_{i -1})^2  \geq |w^*_{i -1, n + i - 1} - u^*_{i-1}u^{*}_{i + n  -1}|, \quad \forall i \in \{2, \dots, n + 1\},$$
that is, it is sufficient to prove that
$$x^*_{i}  - (x^*_{i})^2  \geq |w^*_{i,n + i} - x^*_{i}y^{*}_{i} |, \quad \forall i \in [n]$$
where $\textup{min} \{x^*_i, y^*_i \} \geq  w^*_{i,n+i} \geq \textup{max} \{x^*_i + y^*_i  - 1, 0\}$ for $i \in [n]$. 
This can be done by considering the four possible extreme values of $w^*$:

\begin{itemize}
\item {\bf Case 1:}
$w^*_{i,n+ i} = 0$. 
In this case, $x^*_i + y^*_i  \leq 1$. 
Then 
\begin{eqnarray*}
{
1 - x^*_i \geq  y^{*}_{i} \quad \Rightarrow \quad
x^*_i(1 - x^*_i) \geq  x^*_{i}y^{*}_{i}  \quad \Rightarrow \quad
x^*_{i}  - (x^*_{i})^2   \geq  x^*_{i}y^{*}_{i} = |w^*_{i,n + i} - x^*_{i}y^{*}_{i} |.
}
\end{eqnarray*}
\item {\bf Case 2:}
$w^*_{i,n+i} = x^*_i + y^*_i - 1 \geq 0$. 
In this case, $x^*_i + y^*_i \geq 1$. 
First, observe that $w^*_{i,n+i} - x^*_{i}y^{*}_{i}  = x^*_i + y^*_i - 1 - x^*_{i}y^{*}_{i} = (- 1 + y^*_i)(1  - x^*_i) \leq 0$. 
Thus, $|w^*_{i,n+i} - x^*_{i}y^{*}_{i}| = -(- 1 + y^*_i)(1  - x^*_i)$. 
Then,
\begin{eqnarray*}
x^*_i \geq 1 - y^{*}_{i} &\quad \Rightarrow \quad & x^*_i(1 - x^*_i) \geq  -(- 1 + y^*_i)(1  - x^*_i)\\ 
&\quad \Rightarrow \quad & x^*_{i}  - (x^*_{i})^2   \geq -(-1 + y^*_i)(1  - x^*_i) = |w^*_{i,n + i} - x^*_{i}y^{*}_{i} |.
\end{eqnarray*}
\item {\bf Case 3:}
$w^*_{i,n +i} = x^*_{i}$. 
In this case, $x_i^* \leq y_i^*$. 
Then,
\begin{eqnarray*}
{
x^*_{i} \leq  y^*_i \quad \Rightarrow \quad
(x^*_{i})^2 \leq  x^*_iy^*_i   \quad \Rightarrow \quad
x^*_{i}  - (x^*_{i})^2   \geq  x^*_i - x^*_iy^*_i  = |w^*_{i,n + i} - x^*_{i}y^{*}_{i} |.
}
\end{eqnarray*}
\item {\bf Case 4:}
$w^*_{i,n+i} = y^*_{i}$. 
In this case, $x_i^* \geq y_i^*$. 
Then,
\begin{eqnarray*}
{
x^*_{i} \geq  y^*_{i} \quad \Rightarrow \quad
x^*_{i}(1  - x^*_{i}) \geq  y^*_{i}(1  - x^*_{i})    \quad \Rightarrow \quad
x^*_{i}  - (x^*_{i})^2   \geq  y^*_i - x^*_iy^*_i = |w^*_{i,n + i} - x^*_{i}y^{*}_{i} |.
}
\end{eqnarray*}
\end{itemize}
\end{proof}

\section{Improved relaxation through a heuristic separation algorithm.}
\label{sec:heur}

{In this section, we describe a procedure that, starting from the McCormick relaxation of the problem, produces an improved relaxation by adding a selection of lifted bilinear cover inequalities.  This procedure, whose details are presented in Algorithm~\ref{alg:scheme}, solves a convex relaxation of the problem at each step and separates the obtained optimal relaxation solution using a heuristic separation procedure.  The steps are repeated until either a preset time limit is reached, a preset iteration limit is reached, or the improvement in relaxation bounds between iterations becomes too small. }

\floatname{algorithm}{Algorithm}
\begin{algorithm}
	\caption{Improving Root Relaxation with Lifted Bilinear Cover Cuts}
	\label{alg:scheme}
	\begin{algorithmic}[1]
		\REQUIRE bilinear programming problem; non-convex nonlinear solver \texttt{globalsolver}; convex second-order cone programming (SOCP) solver \texttt{convexsolver};\\
		parameters: \\ minimum improvement threshold $\epsilon_z$, iteration number limit $T$, heuristic time limit $T_{heu}$,
		 parameters for heuristic and solver
        \STATE generate McCormick relaxation \eqref{prob:mc} 
\STATE solve \eqref{prob:mc} using  
\texttt{convexsolver};\\
obtain optimal solution $(x^0, y^0)$ and lower bound $z^0$; set $t=1$
        \WHILE{(heuristic time limit $T_{heu}$ not reached)}
        \STATE generate bilinear cover cuts using Algorithm~\ref{alg:heu} and add them to the problem 
        \STATE \textbf{if} (violated bilinear cover cuts were not generated) \textbf{break}
        \STATE solve the improved relaxation with \texttt{convexsolver};\\ obtain optimal solution $(x^t, y^t)$ and lower bound $z^t$
        \STATE \textbf{if} $|z^t-z^{t-1}|/|z^{t-1}|<\epsilon_z$ (improvement is too small), \textbf{break}
        \STATE $t\leftarrow t+1$
        \STATE \textbf{if} $t>T$ (iteration limit is reached) \textbf{break}
        \ENDWHILE
        \STATE solve the problem using 
        \texttt{globalsolver}
	\end{algorithmic}
\end{algorithm}

{The workhorse of the procedure described above is a heuristic separation algorithm that, given a relaxation solution, seeks to generate a violated lifted bilinear cover cut from each of the bilinear constraints of the problem. }
The heuristic, which we explain next, is formally presented in Algorithm~\ref{alg:heu}.

\floatname{algorithm}{Algorithm}
\begin{algorithm}
	\caption{Heuristic Separation Algorithm for Lifted Bilinear Cover Cuts}
	\label{alg:heu}
	\begin{algorithmic}[1]
	\REQUIRE bilinear programming problem with relaxation solution 
	$(x^t, y^t)$;\\
	parameters:\\
	approximation threshold $\epsilon$, search iteration limit $S_j$ for $j \in [m]$
	\FOR {$j=1:m$}
	\STATE \textbf{if} heuristic time limit $T_{heu}$ reached, \textbf{break}
	\STATE \textbf{if} $\sum_{i=1}^n a_i^j x_iy_i-d_j\geq 0$ (bilinear constraint $j$ not violated), \textbf{continue}
	\FOR {$i=1:n$}
	\STATE \textbf{if} $a_i^j=0$, set label $l_i^j$ as inactive ($(x_i, y_i)$ is not in the bilinear cover cut for row $j$)
	\STATE \textbf{else if} $x_iy_i<\epsilon$, set $l_i^j$ as $J_0$
	\STATE \textbf{else if} $x_iy_i>1-\epsilon$, set $l_i^j$ as $J_1$
	\STATE \textbf{else if} $a_i^j>0$, set $l_i^j$ as $I$
	\STATE \textbf{else} (\textit{i.e.}, $a_i^j<0$), set $l_i^j$ as $J_1$ or $J_0$ with probability $x_iy_i$ and $1-x_iy_i$ respectively
	\ENDFOR
	\IF {partition $\Lambda$ with labels $l$ is not a minimal cover yielding partition for row $j$}
	\FOR {$s=1:S_j$} 
	\STATE (check reason for failure)
	{\STATE \textbf{if} $d_j^\Lambda = d_j - \sum_{i\in J_1} a_i^j \leq 0$ (right-hand-side is not positive):\\
 	select randomly $i^* \in \{i\ | \ a_i^j>0, l_i^j=J_1\}\cup \{i \ | \  a_i^j<0, l_i^j = J_0\}$ and flip $l_{i^*}^j$ from $J_1$ to $I$ (in the first case) or from $J_0$ to $J_1$ (in the second case); \textbf{break} if not found
    \STATE \textbf{else if} $\Delta_j = \sum_{I} a_i^j-d_j^\Lambda \leq 0$ ($\Delta_j$ is not positive, \textit{i.e.}, not a cover):\\
	select randomly $i^* \in \{i\ | \  a_i^j>0, l_i^j=J_0\}\cup \{i\ | \  a_i^j<0, l_i^j = J_1\}$ and flip $l_{i^*}^j$ from $J_0$ to $J_1$ or from $J_1$ to $J_0$; \textbf{break} if not found
	\STATE \textbf{else} ($\exists i\in I, a_i^j <\Delta_j$, \textit{i.e.}, not a minimal cover):\\
	find $i^* \in \textup{argmin}_{I} {a_i^j}$ and set $l_{i^*}^j$ as $J_1$}
	\STATE \textbf{if} partition $\Lambda$ with adjusted labels $l$ is a minimal cover yielding partition, \textbf{break}
	\ENDFOR
	\ENDIF
	\STATE \textbf{if} lifted bilinear cover cut is generated and violated by $(x^t, y^t)$: \\
	add the lifted bilinear cover cut to the problem
	\ENDFOR
	\end{algorithmic}
\end{algorithm}

The simple randomized heuristic that we implement can be intuitively described as ``guess a reasonable partition $(I,J_0, J_1)$" and ``randomly adjust it if it does not yield a violated cut". 
{Given a solution to separate, it considers each constraint $j$ of the bilinear problem in sequence.}
For separating from a given row $j$, the heuristic must decide for each variable, whether it is in the set $I$, $J_0$, or $J_1$. 
The resulting cut is entirely determined based on this choice. 
Formally, the following steps are performed:
\begin{itemize}
\item  
The heuristic starts with an intuitive guess (steps 4-10), where the label $l_i^j$ for variable $i$ is assigned to $J_0$ if $x_iy_i$ is close to $0$ and to $J_1$ if $x_iy_i$ close to $1$. For all other indices $i$, $l_i^j$ is set to $I$ if $a^j_i >0$, and randomly to $J_0$ or $J_1$ otherwise. 

\item 
After generating the initial guess, if $(I,J_0,J_1)$ fails to be a minimal cover yielding partition, the heuristic performs several attempts at randomly adjusting the partition (steps 12-19), where labels $l_i^j$ are flipped based on different cases of failure (steps 14, 15, 16 respectively). We allow the number $S_j$  of attempts taken to depend on the row $j$, as we expect separation to be harder to perform in denser rows.

\item 
{If a partition $(I,J_0,J_1)$ is found that corresponds to a violated lifted cover, the associated inequality is added, and we proceed to attempting to generate a cut from the following problem row. }
\end{itemize}

Algorithm~\ref{alg:heu} is a basic guess-and-adjust heuristic, where the adjust step shares similarity to the WALK SAT approach of \cite{schoning1999probabilistic}. 
Nevertheless, our experiments presented in the later sections will illustrate the power of this basic heuristic, thereby also demonstrating the potential of the lifted bilinear cover cuts. 

\section{Experimental setup.}\label{sec:setup}
In this section, we present the way we setup our experiments, including instance generation, environment, and various other parameters.

\subsection{Randomly generated instances}
We consider instances of the form
\begin{alignat*}{5}
    \textup{min}\quad & \sum_{i \in [n]} \left( c_i^xx_i + c_i^y y_i\right)\\
    \textup{s.t.}\quad & \sum_{i \in [n]} a_i^j x_iy_i \geq d_j,&\quad &  \forall j \in [m],\\
    & x_i, y_i \in [0, 1],&& \forall i \in [n].
\end{alignat*}
Inspired by the classical paper~\cite{crowder1983solving} and by the fact most instances in practice are sparse, we generate sparse instances of the above model.

We generate test instances based on three parameters: 
the number of rows (\textit{i.e.}, bilinear constraints) $m$, 
the number of variables $n$, and 
the density parameter $p$. 
Moreover, we generate two groups of instances, with the first group having non-negative coefficients $a_i^j$ (which we call \textit{\nni}), and the second group having both non-negative and non-positive coefficients $a_i^j$ (which we call \textit{\msi}).

After fixing the three parameters $m$, $n$, and $p$, we randomly generate the objective coefficients $c$, the bilinear coefficients $a$ and the right-hand-sides $d$. 
The coefficients $c_i^x$ and $c_i^y$ are generated independently from a uniform distribution on $[0,1]$; $a_i^j$ is set to be non-zero with probability $p$, and when non-zero, it is generated from a uniform distribution on $[0,1]$ for \nni, or from a uniform distribution on $[-1,1]$ for \msi. 
For $d_j$, we  let $s_j := \sum_{i \in [n]} a_i^j$ and we set $d_j = r_js_j$, where $r_j$ is drawn from a uniform distribution on $[0,1]$ when $s_j > 0$ and $r_j$ is drawn from a uniform distribution on $[1,2]$ otherwise.

For our experimental design, we chose $m, n\in[100, 250, 500]$ and $p\in[0.01, 0.02, 0.05]$.  To make the problem ``reasonably" sparse (\textit{i.e.}, neither too sparse nor too dense), we only select those combinations where $np\in[5, 20]$ where $np$ can be understood as the expected number of non-zero terms for each constraint.  There is a total of $15$ different settings of $m$, $n$, and $p$ that satisfy this requirement.  For each setting, we randomly generate ten different instances with non-negative coefficients and ten instances with mixed signs. 

\subsection{Environment.}
The experiments were implemented in Python 3.9.7 with Gurobi 9.5.0 for both convex SOCP solver \texttt{convexsolver} and non-convex nonlinear solver \texttt{globalsolver}, with parameter \verb|NonConvex=2| for the latter. 
All experiments were run in parallel on the high-performance computational cluster of ISyE, Georgia Tech, which contains roughly 2,340 cores of x86-64 processing with over 28.9TB of memory spread across the systems. 
Each task was processed on a dedicated core with 8GB of memory.

\subsection{Testing.}
We compare two settings for evaluating the efficacy of the lifted bilinear cover inequalities:
\begin{itemize}
\item 
We separate the lifted bilinear cover inequalities as described in the previous section. 
Then we give as input to Gurobi the separable bilinear optimization problem augmented with these separated cuts. 
\item 
We directly input the separable bilinear optimization problem to Gurobi.
\end{itemize}
In both the above settings, we use the default parameters of Gurobi. 

For \nni, we also add a comparison to the valid inequality described in \cite{tawarmalani2010strong}:
\begin{equation}
\label{eq:MT}
\sum_{i \in [n]} \sqrt{a_i^jx_iy_i}\geq \sqrt{d_j},
\end{equation}
for the $j$-th row, where the $a^j_i$s are non-negative. 
Together with nonnegativity constraints, inequality \eqref{eq:MT} gives the convex hull of the $j$-th row if there is no upper bound on the variables. 
For testing these cuts, we add them to the problem (effectively bolstering the McCormick relaxation at the root node) and then solve the resulting problem with default Gurobi.

We note that there is no obvious way to use the cuts \eqref{eq:MT} for \msi, as there is no easy way to complement the variables in \msi~so as to produce equivalent bilinear instances with non-negative coefficients.

In Algorithm~\ref{alg:scheme}, we set both the heuristic time limit and the time limit for Gurobi non-linear non-convex solver to 1,800 seconds (half an hour). 
While it might seem that our algorithm has an unfair advantage with extra time for heuristic, we will see later that the difference is negligible as the time spent on the separation heuristic is significantly smaller compared to the time for solving the non-convex problem. 

{Moreover, for our heuristic, we set $\epsilon_z = 5\times 10^{-3}$, $\epsilon = 10^{-2}$, $T=10\cdot np$ and $S_j=10\cdot \#(a_i^j\ |\ a_i^j\neq 0)$, where $\#(a_i^j\ |\  a_i^j\neq 0)$ represents the number of non-zero coefficients in the $j$-th constraint.}

The main measure we use for comparison is the quality of the lower bound, which is measured by relative gap closed $\rho$. 
Using $z_{Mc}$ as the value of McCormick relaxation, and $z_{opt}$ as the best primal value (best solution found among both settings), we define the relative gap closed in percentage as
$$
\rho := \frac{z^* - z_{Mc}}{z_{opt} - z_{Mc}} \times 100\%,
$$
where $z^*$ is the final lower bound achieved by the corresponding method. 

Similarly, we consider the relative gap improvement $\Delta\rho$ to measure the gain by using lifted bilinear cover cuts, which is defined as
$$
\Delta\rho := \frac{z^*_{BC} - z^*_{GRB}}{z_{opt} - z_{Mc}} \times 100\% = \rho_{BC} - \rho_{GRB},
$$
where $z^*_{BC}$ is the final lower bound achieved by Gurobi for the model where the lifted bilinear cover cuts are added, whereas $z^*_{GRB}$ is the lower bound returned by the default Gurobi non-convex solver without any lifted bilinear cover cuts.

\section{Evaluation.}\label{sec:eval}
In this section, we evaluate the performance of lifted bilinear cover cuts separated with the heuristic, compared to the default Gurobi solver. 
Our main focus is the relative gap closed $\rho$ and the relative gap improvement $\Delta\rho$ defined in the previous section. 
We will however also discuss the number and the quality of bilinear cover cuts produced as well as associated computation times.

\subsection{\Nni.}
\subsubsection{Relative gap closed at the end of the time limit.}

We begin by presenting in Figure~\ref{fig:rhoposp} results regarding the relative gap closed ($\rho$) for \nni.

\begin{figure}
    \centering
    \begin{subfigure}{\textwidth}
    \centering
    \includegraphics[width=\textwidth]{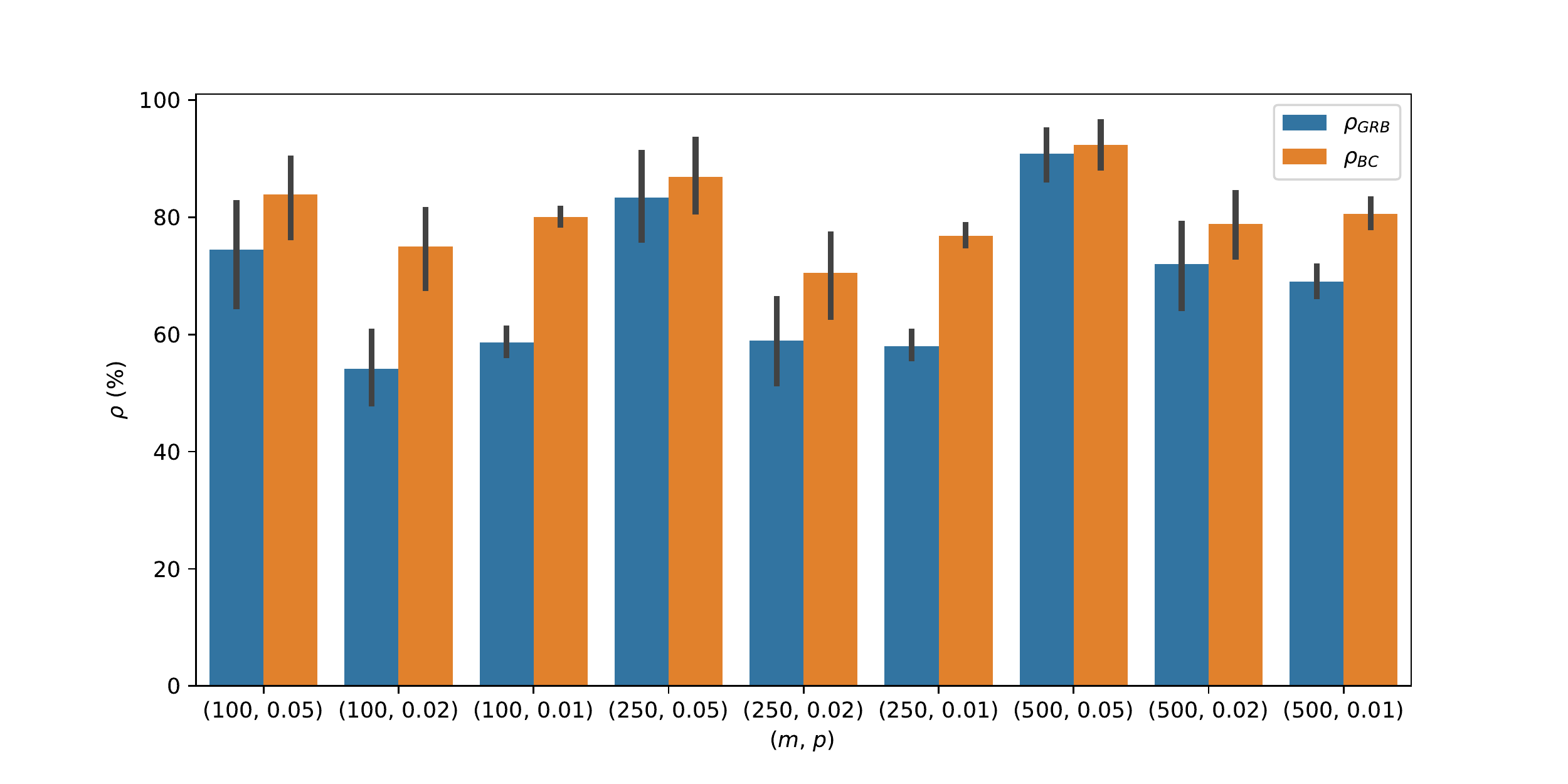}
    \end{subfigure}
    \caption{Relative gap closed $\rho$ (\%) for \nni:
    $\rho_{GRB}$ is the gap closed by Gurobi on the separable bilinear instances, and $\rho_{BC}$ is the gap closed by Gurobi on the separable bilinear instances when lifted bilinear cover cuts are added at the root node.}
    \label{fig:rhoposp}
\end{figure}

As can be seen in the figure, with the bilinear cover cuts applied at the root node, Gurobi closes more gap than the default method, resulting in a final lower bound much closer to the primal value. 
The improvement reduces when the density $p$ increases, which is anticipated as it becomes more difficult to generate violated lifted bilinear cover cuts using our separation heuristic.

In Figure~\ref{fig:rhopospm}, we take a closer look at the relative gap improvement $\Delta\rho$.
Not only is the average $\Delta\rho$ (as shown in the figure) positive, it is in fact positive for most instances. 

\begin{figure}
    \centering
    \includegraphics[width=\textwidth]{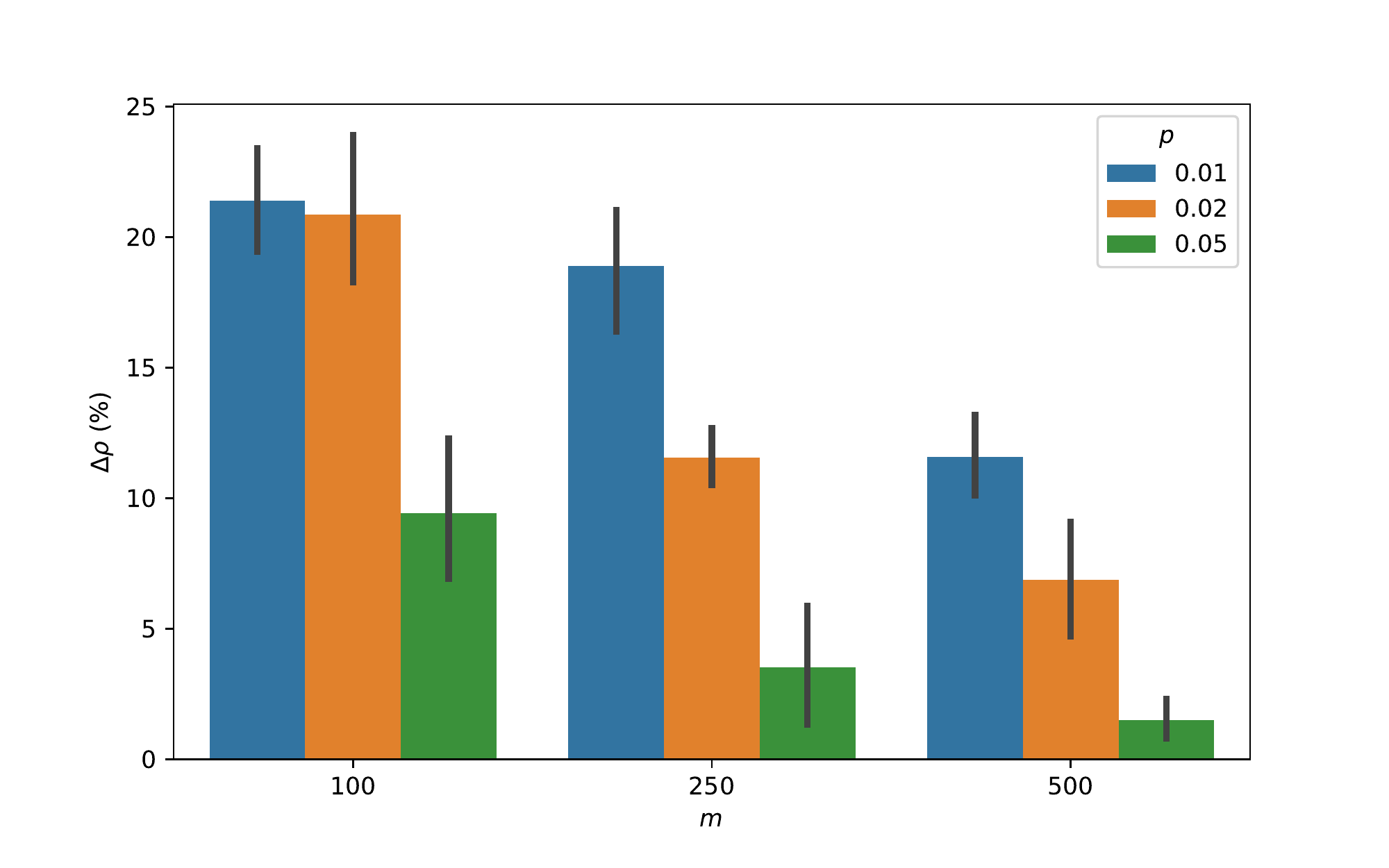}
    \caption{Relative gap improvement $\Delta\rho$ (\%) for \nni.}
    \label{fig:rhopospm}
\end{figure}

From analyzing these two figures, we draw the conclusion that lifted bilinear cover cuts provide a stable and robust performance boost for sparse \nni.

\subsubsection{Lifted bilinear cover cuts - number and root gap closed.}

Next, we investigate the number and the quality of generated lifted bilinear cover cuts. 
In Figure~\ref{fig:npospm}, we present the number of cuts generated. 
In Figure~\ref{fig:rhoheupospm}, we display the relative gap closed solely at the root node by lifted bilinear cover cuts separated using the heuristics, which we compute as 
$$\rho_{Heu}: = \frac{z_{root BC} - z_{Mc}}{z_{opt} - z_{Mc}}.$$ 

We observe that the number of separated lifted bilinear cover cuts reduces when $p$ increases.
Further, as can be anticipated, the number of cuts generated increases with the number of constraints.

The results in Figure~\ref{fig:rhoheupospm} illustrate the strength of lifted bilinear cover cuts over \nni, where much of the gap can be closed by solving the convex relaxation obtained after applying the separation heuristic, which is significantly cheaper than solving the original problem. 
Interestingly, as far as the root gap closed, the performance of these cuts appears to be independent of the sparsity level $p$: it is approximately $60\%$ for most classes of instances.

\begin{figure}
    \centering
    \includegraphics[width=\textwidth]{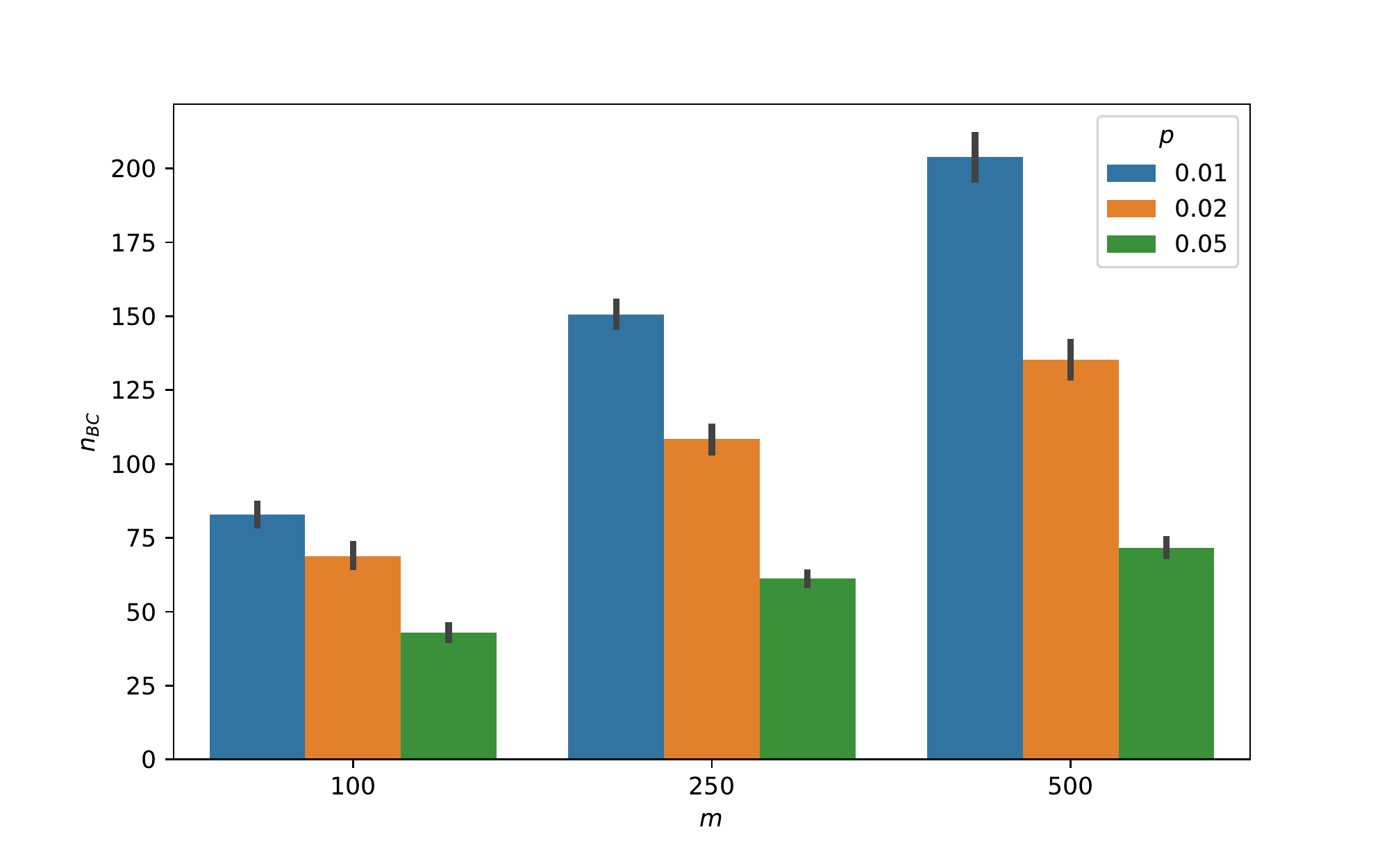}
    \caption{Number of lifted bilinear cover cuts $n_{BC}$ for \nni.}
    \label{fig:npospm}
\end{figure}

\begin{figure}
    \centering
    \includegraphics[width=\textwidth]{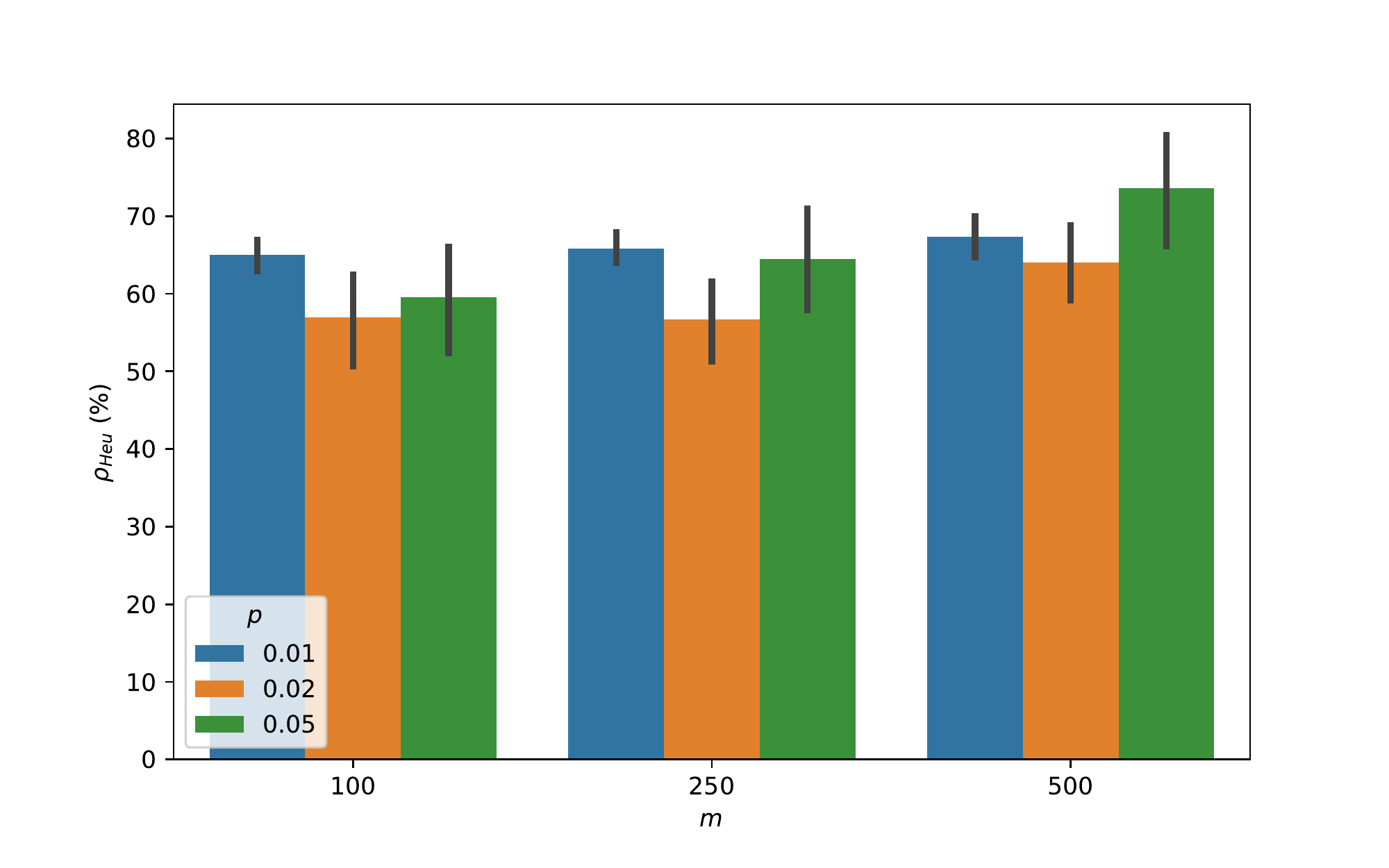}
    \caption{Relative gap closed at root node by lifted bilinear cover cuts ($\rho_{Heu}$) for \nni.}
    \label{fig:rhoheupospm}
\end{figure}

\subsubsection{Time.}

We consider now the time taken by the heuristic to separate cuts at the root node. 
As mentioned earlier, we impose a time restriction of 30 minutes for this step. 
As it turns out, the heuristic (including cut separation and solution of the SOCR convex relaxations) takes only a few seconds; 
see results in Figure~\ref{fig:tpospm}.

The time taken by the heuristic increases as the size of the instances becomes larger, though it is still relatively negligible compared to the time spent by the non-convex solver.

\begin{figure}
    \centering
    \includegraphics[width=\textwidth]{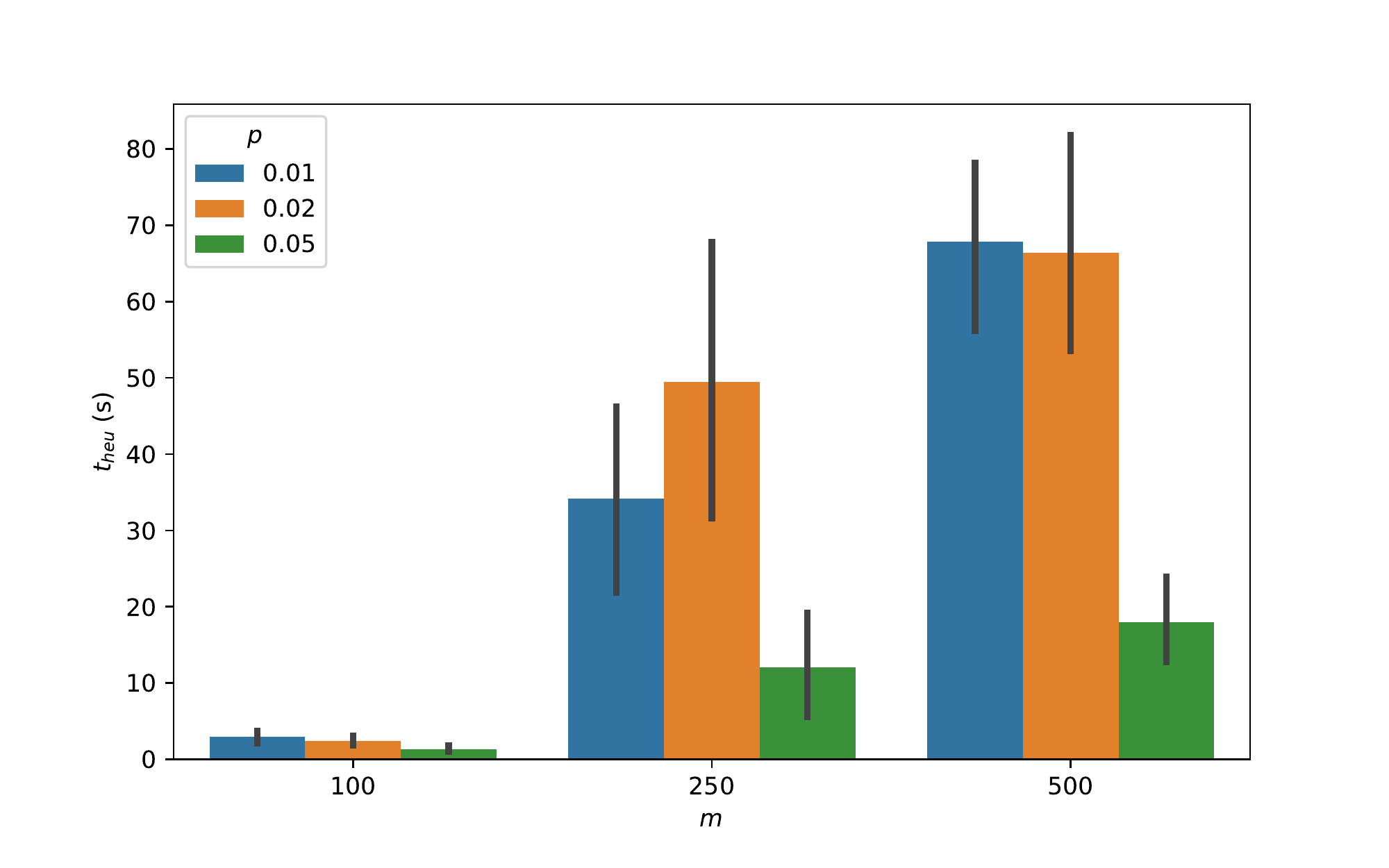}
    \caption{Heuristic CPU time (s) for \nni.}
    \label{fig:tpospm}
\end{figure}

Next we present the total times in Figure~\ref{fig:tposp}.  The difference in time is nearly non-existent, as the root node heuristic takes very little time and the instances remained unsolved after 30 minutes of Gurobi's run time in both settings.  The similarity in overall time supports the fairness of the comparison of the two settings and confirms the effectiveness of the lifted bilinear cover cuts.

\begin{figure}
    \centering
    \includegraphics[width=\textwidth]{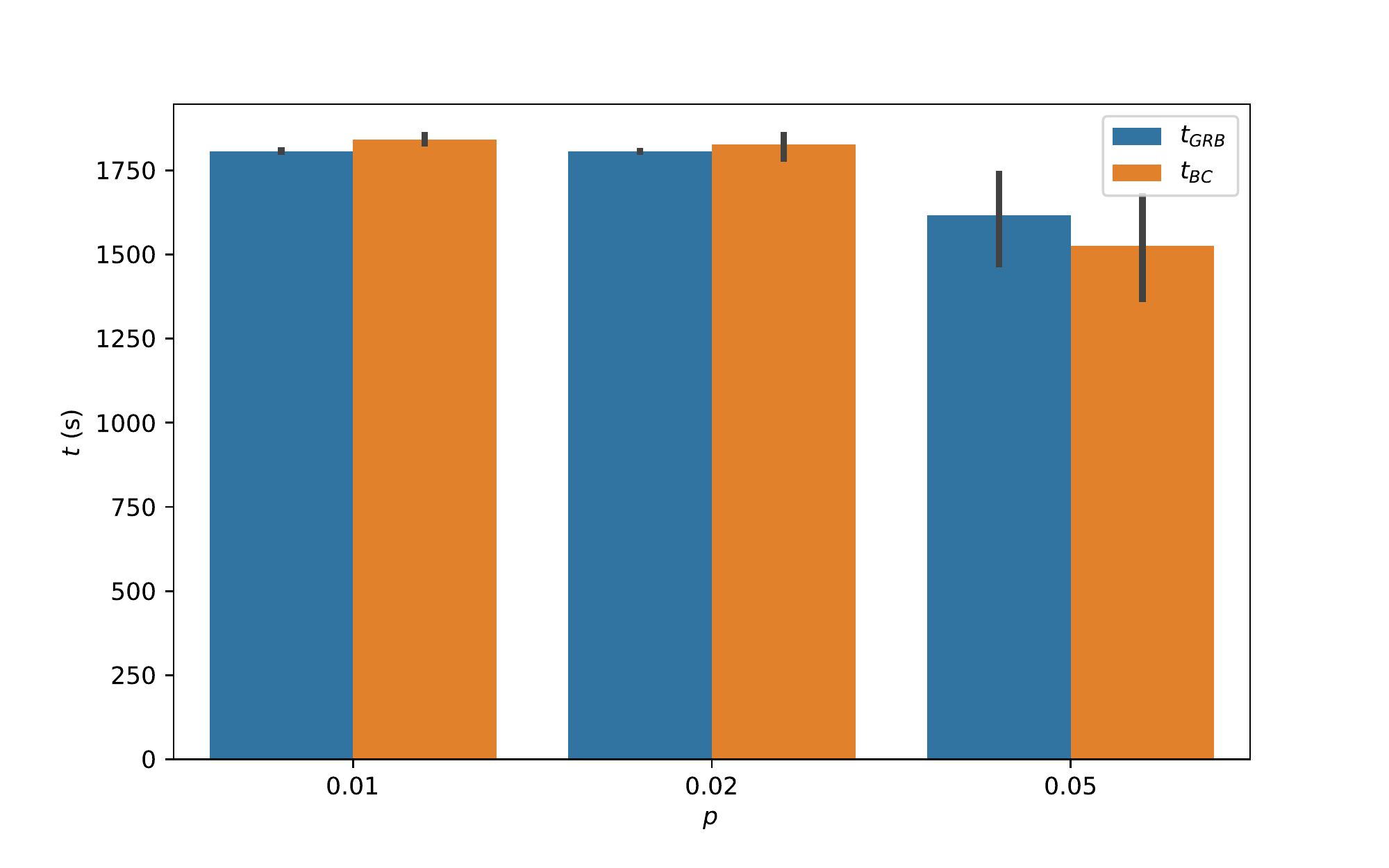}
    \caption{Total CPU time (s) for \nni: $t_{BC}$ is the time for the case where lifted bilinear cover cuts are added at the root node, and $t_{GRB}$ is the time for the other setting.}
    \label{fig:tposp}
\end{figure}

\subsubsection{Comparison with the cuts~\eqref{eq:MT}.}

We next compare the lifted bilinear cover cuts with the cuts~\eqref{eq:MT} described in \cite{tawarmalani2010strong}. 
Because a single one of these cuts can be generated from each row, no involved separation is required.
The results are presented in Figure~\ref{fig:rhopospmSP1} and Figure~\ref{fig:rhopospmSP2}.

In Figure~\ref{fig:rhopospmSP1}, we present the relative gap closed results at the root node. While it has already been shown from previous sections that the lifted bilinear cover cuts could close a significant portion of gap at the root node, we note that \eqref{eq:MT} closes a significantly smaller portion of the gap at root node. 

In Figure~\ref{fig:rhopospmSP2}, the final relative gap closed results are presented.
we observe a similar performance between default Gurobi ($\rho_{GRB}$) and the method applying \eqref{eq:MT} ($\rho_{MT}$), with lifted bilinear cover cuts closing more gap ($\rho_{BC}$).

Combining the above observations, it is clear that the lifted bilinear cover cuts are able to close more gap at the root node and provide a more robust and stable performance boost overall, compared against both default Gurobi as well as cuts~\eqref{eq:MT}.

\begin{figure}
    \centering
    \includegraphics[width=\textwidth]{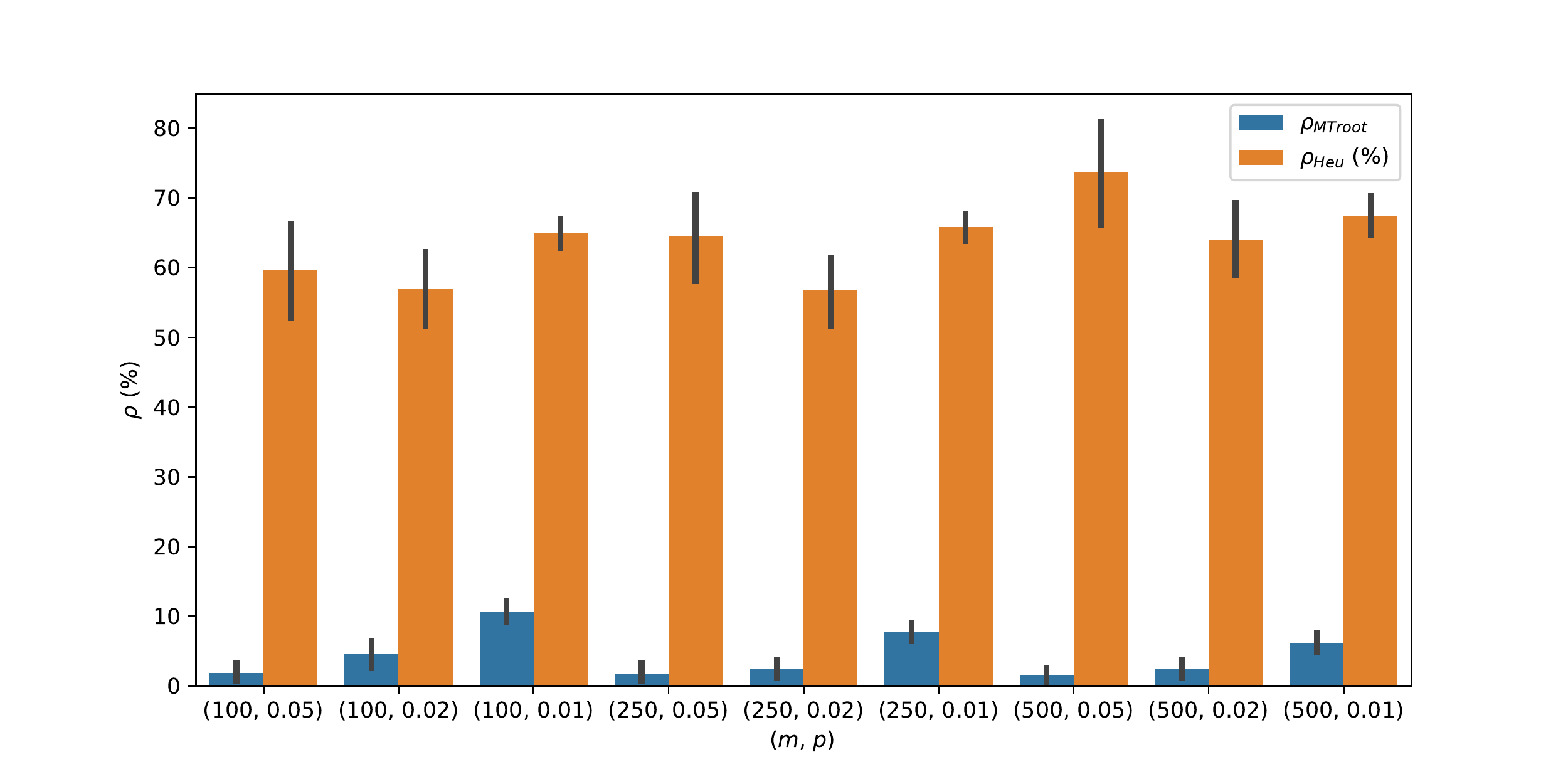}
    \caption{Relative gap closed at root node by lifted bilinear cover cuts ($\rho_{Heu}$) or by \eqref{eq:MT} ($\rho_{MTroot}$) for \nni.}
    \label{fig:rhopospmSP1}
\end{figure}

\begin{figure}
    \centering
    \includegraphics[width=\textwidth]{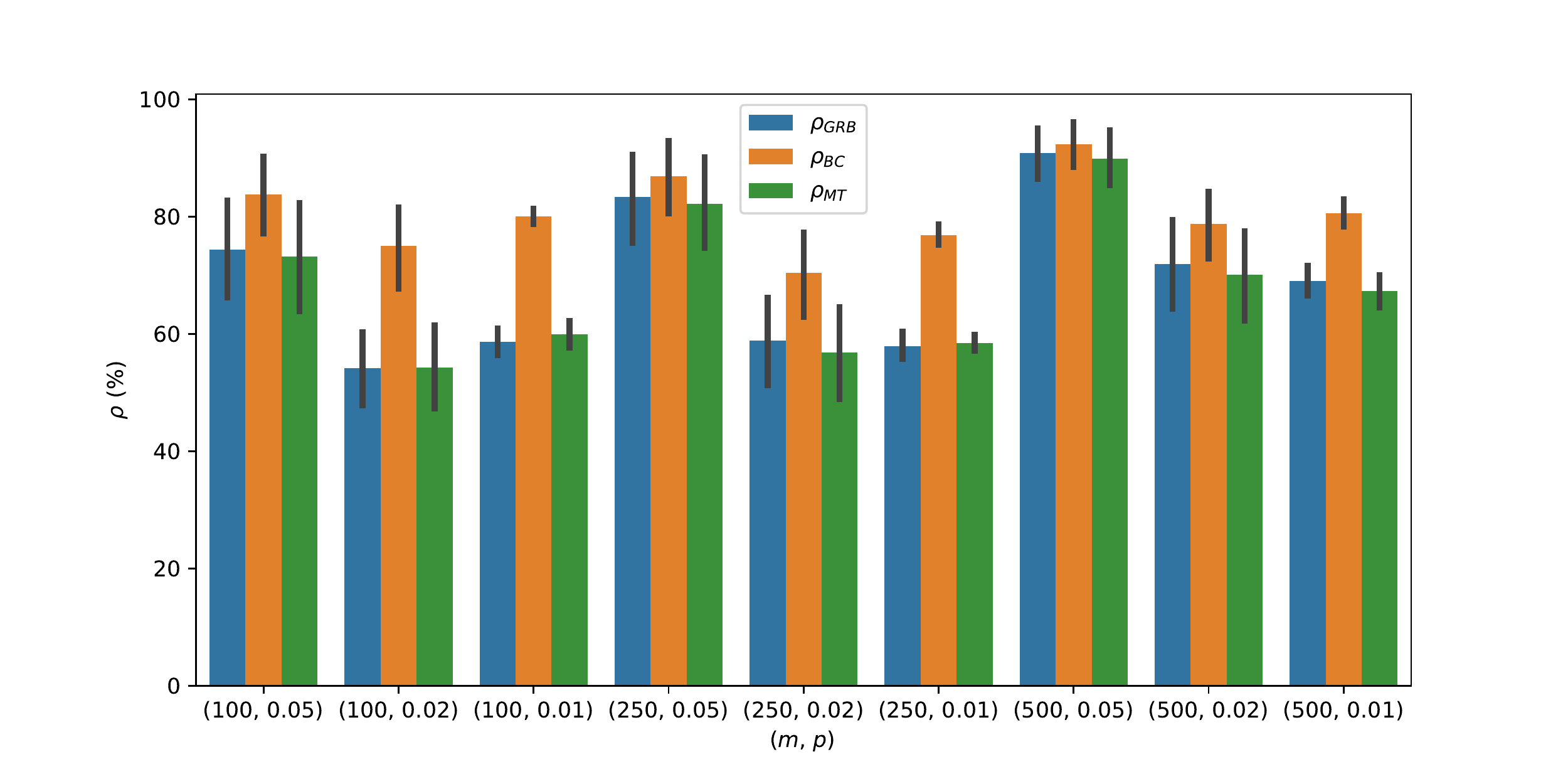}
    \caption{Relative gap closed $\rho$ (\%): $\rho_{GRB}$ is the gap closed by Gurobi, $\rho_{BC}$ is the gap closed by Gurobi when lifted bilinear cover cuts are added at the root node, and $\rho_{MT}$ is the gap closed by Gurobi when cuts \eqref{eq:MT} are added at the root node}
    \label{fig:rhopospmSP2}
\end{figure}

\subsection{\Msi.}
We now focus on \msi.  We expect that such instances are inherently harder compared to \nni, since it is more difficult to generate lifted bilinear cover cuts for them.  Nevertheless, we next show that our heuristic still works well in separating lifted bilinear cover cuts in this case. 

\subsubsection{Relative gap at the end of the time limit.}

We first present in Figure~\ref{fig:rhonegp} the results regarding the relative gap closed $\rho$.

\begin{figure}
    \centering
    \begin{subfigure}{\textwidth}
    \centering
    \includegraphics[width=\textwidth]{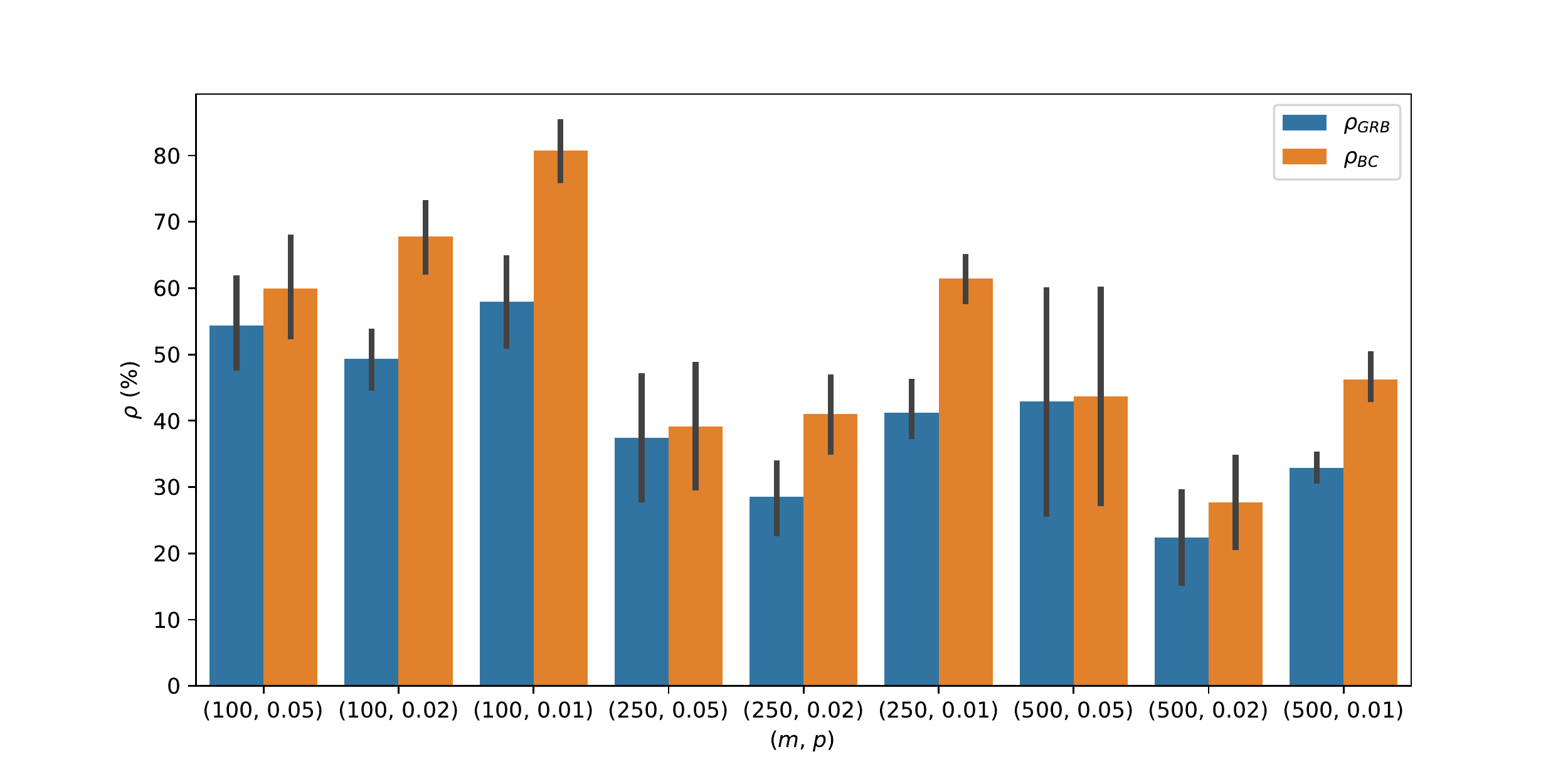}
    \end{subfigure}
    \caption{Relative gap closed $\rho$ (\%) for \msi: $\rho_{GRB}$ is the gap closed by Gurobi on the separable bilinear instances, and $\rho_{BC}$ is the gap closed by Gurobi on the separable bilinear instances when lifted bilinear cover cuts are added at the root node.}
    \label{fig:rhonegp}
\end{figure}

For the more difficult \msi, our algorithm with root node heuristic again showed improvement, which was especially marked for the sparser instances. 

\begin{figure}
    \centering
    \includegraphics[width=\textwidth]{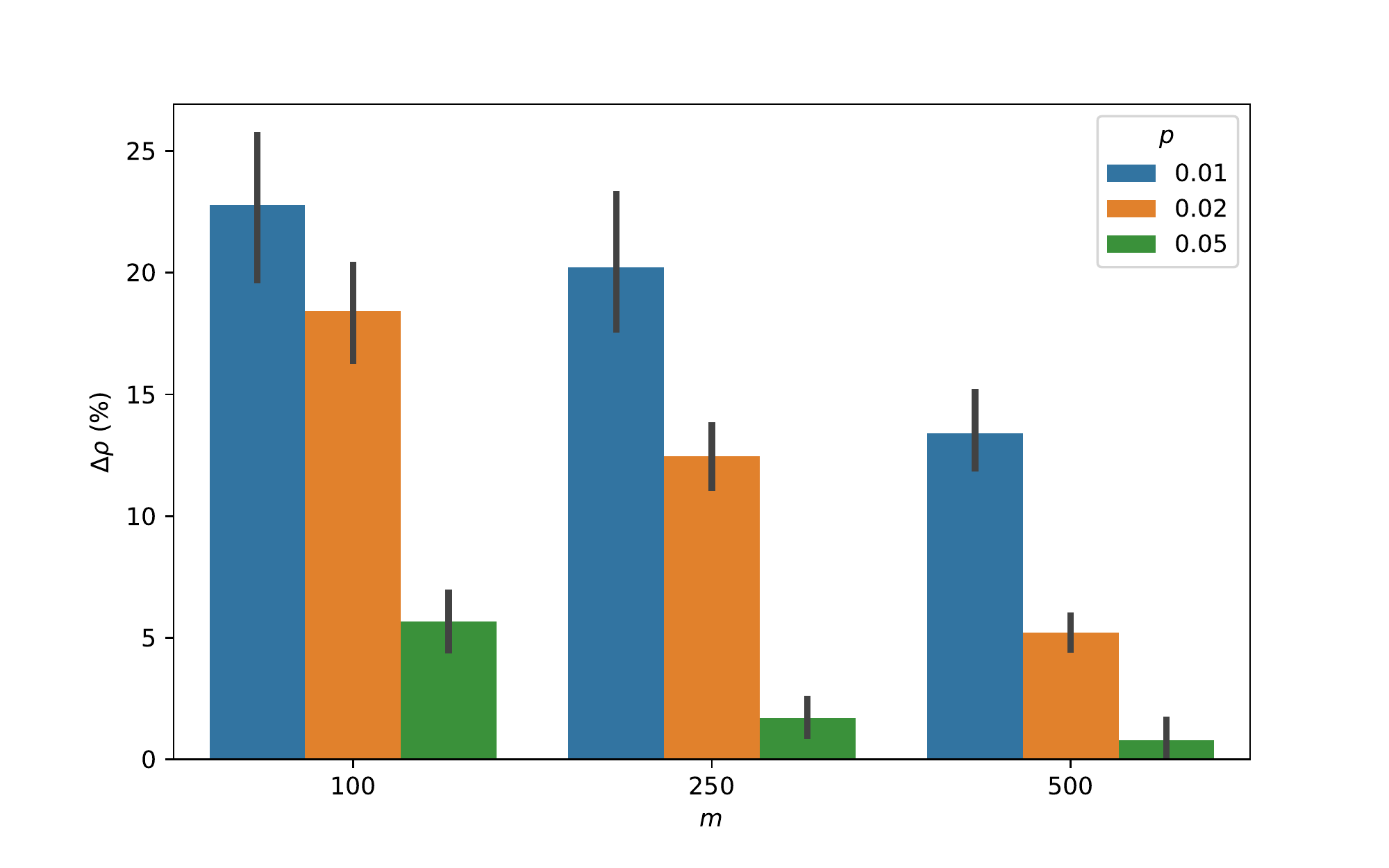}
    \caption{Relative gap improvement $\Delta\rho$ (\%) for \msi.}
    \label{fig:rhonegpm}
\end{figure}

A closer look at Figure~\ref{fig:rhonegpm} confirms the benefits, as we can observe a positive $\Delta\rho$ across every instance, which is especially noticeable for sparse settings (\textit{i.e.}, small values of $p$).

Therefore, for the more difficult \msi, Figure~\ref{fig:rhonegp} and Figure~\ref{fig:rhonegpm} indicate that our root node separation heuristic retains its strength by providing robust improvements across instances.

\subsubsection{Lifted bilinear cover cuts - number and root gap closed.}

We now turn to the number of lifted bilinear cover cuts added; see Figure~\ref{fig:nnegpm}.
Figure~\ref{fig:rhoheunegpm} presents the gap closed by these inequalities at the root node.

\begin{figure}
    \centering
    \includegraphics[width=\textwidth]{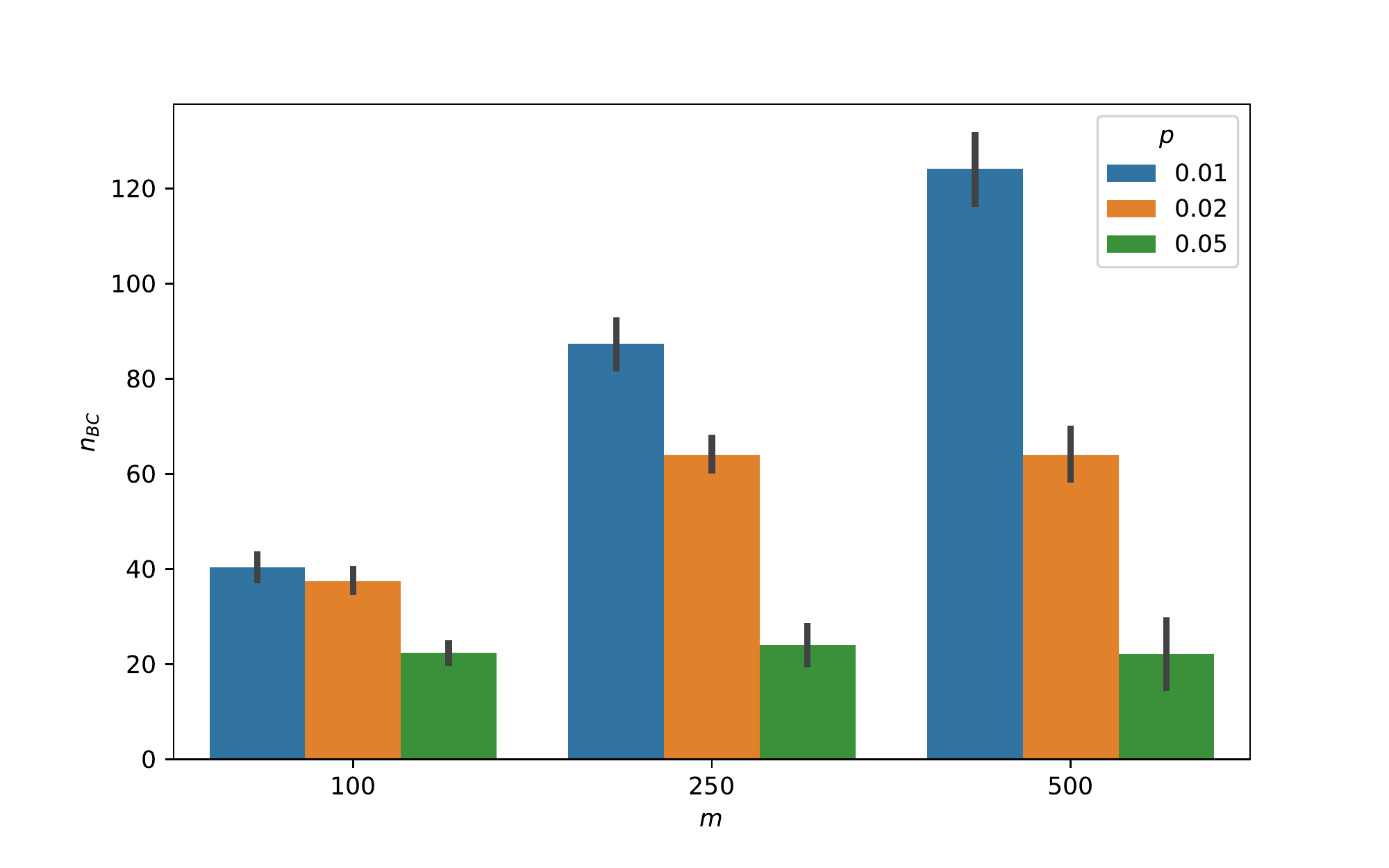}
    \caption{Number of lifted bilinear cover cuts $n_{BC}$ for \msi.}
    \label{fig:nnegpm}
\end{figure}

\begin{figure}
    \centering
    \includegraphics[width=\textwidth]{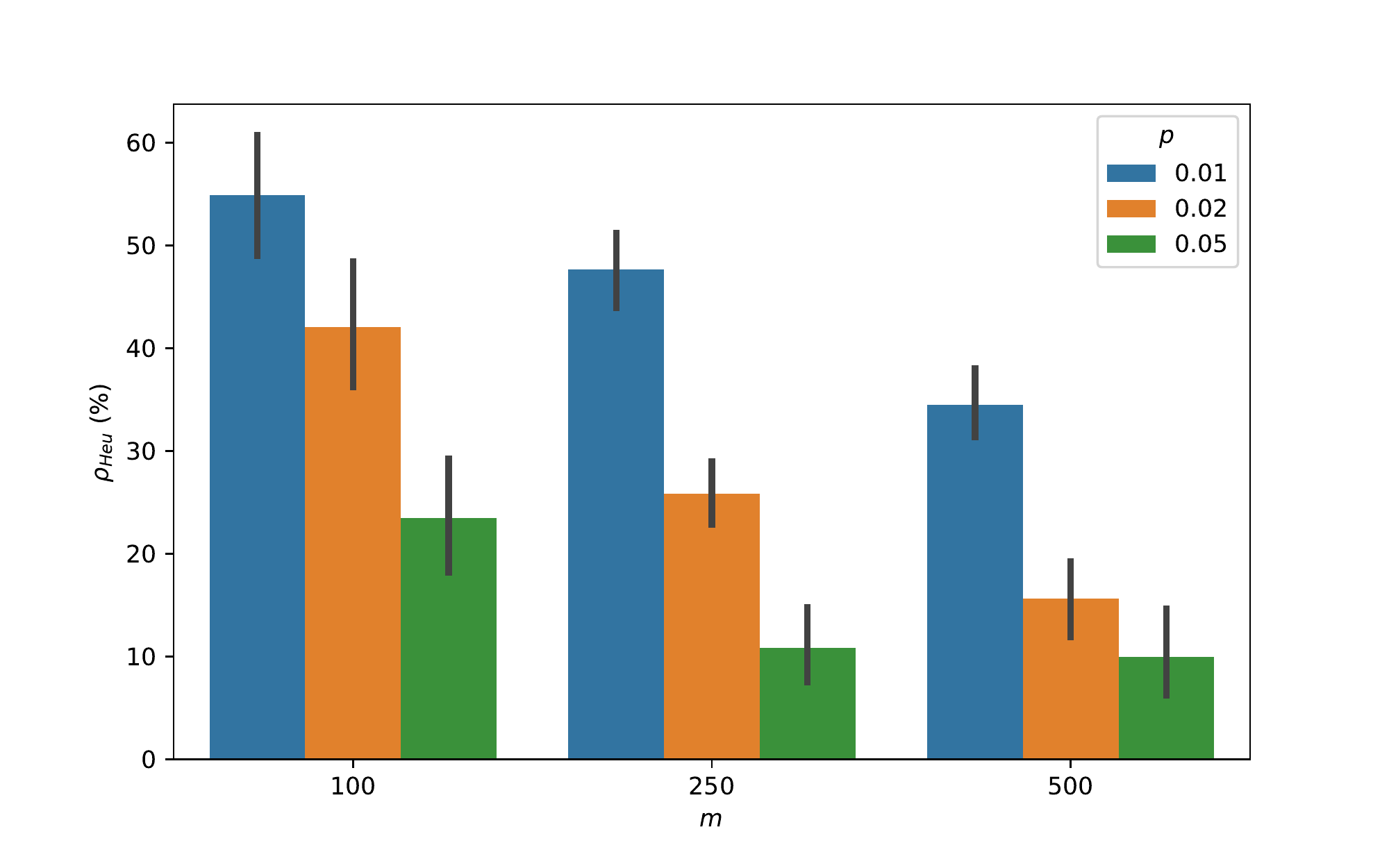}
    \caption{Relative gap closed at root node by lifted bilinear cover cuts ($\rho_{Heu}$) for \msi.}
    \label{fig:rhoheunegpm}
\end{figure}

As shown in the figures, our algorithm excels at separating lifted bilinear cover cuts for sparser problems, as more cuts were generated for smaller $p$ while closing more gap (\textit{i.e.}, larger $\rho_{Heu}$). 
Comparing Figure~\ref{fig:npospm} and Figure~\ref{fig:nnegpm}, it appears (as we expected) that negative coefficients in bilinear rows make separation harder, as evidenced by fewer cuts being generated for \msi~and the root gap closed also reducing for \msi.

\subsubsection{Time.}

We now focus on total computational time expanded. 
Figure~\ref{fig:tnegpm} presents the time for the root node separation heuristic whereas Figure~\ref{fig:tnegp} presents the overall time results for \msi.

\begin{figure}
    \centering
    \includegraphics[width=\textwidth]{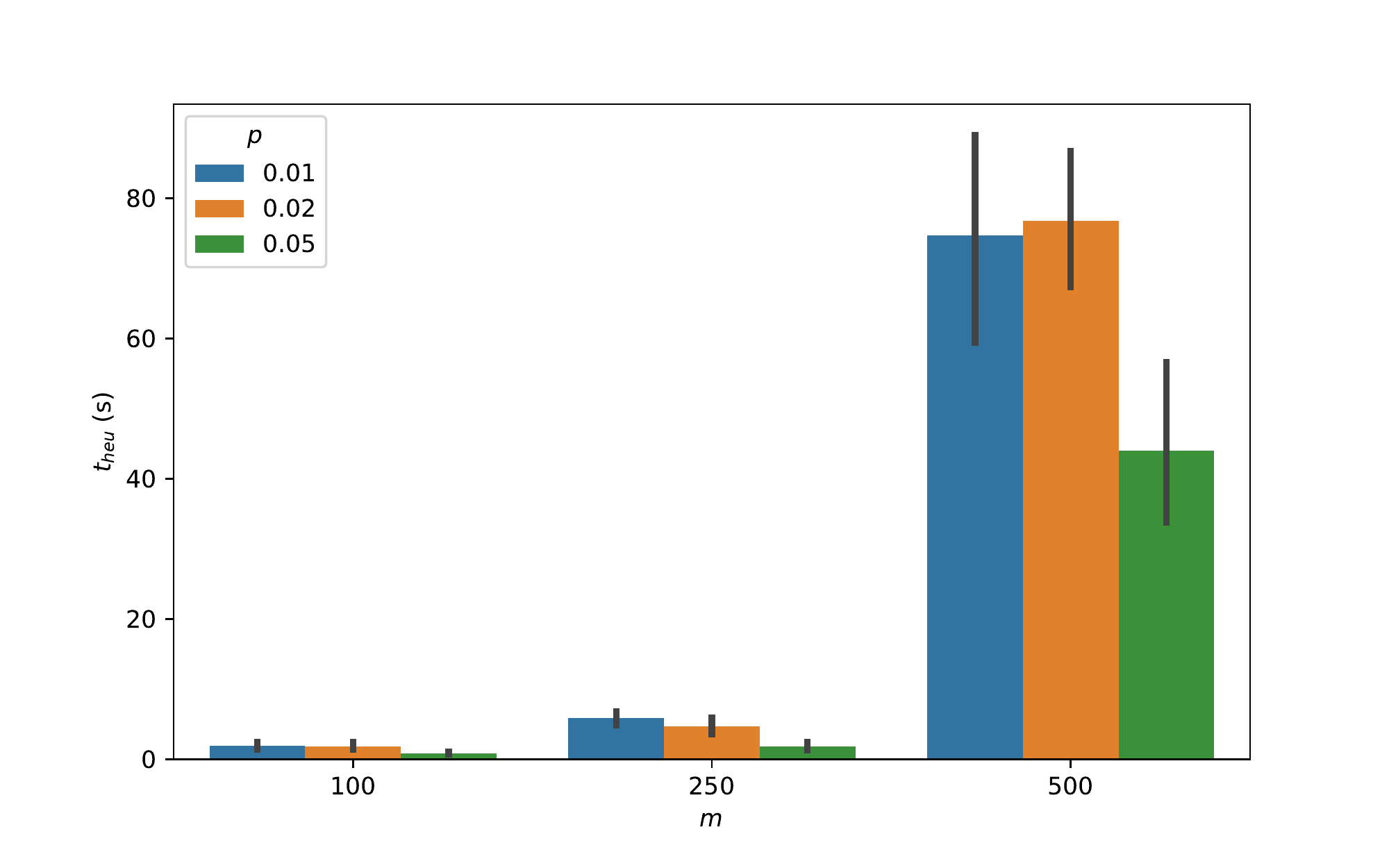}
    \caption{Heuristic CPU time (s) for \msi.}
    \label{fig:tnegpm}
\end{figure}

\begin{figure}
    \centering
    \includegraphics[width=\textwidth]{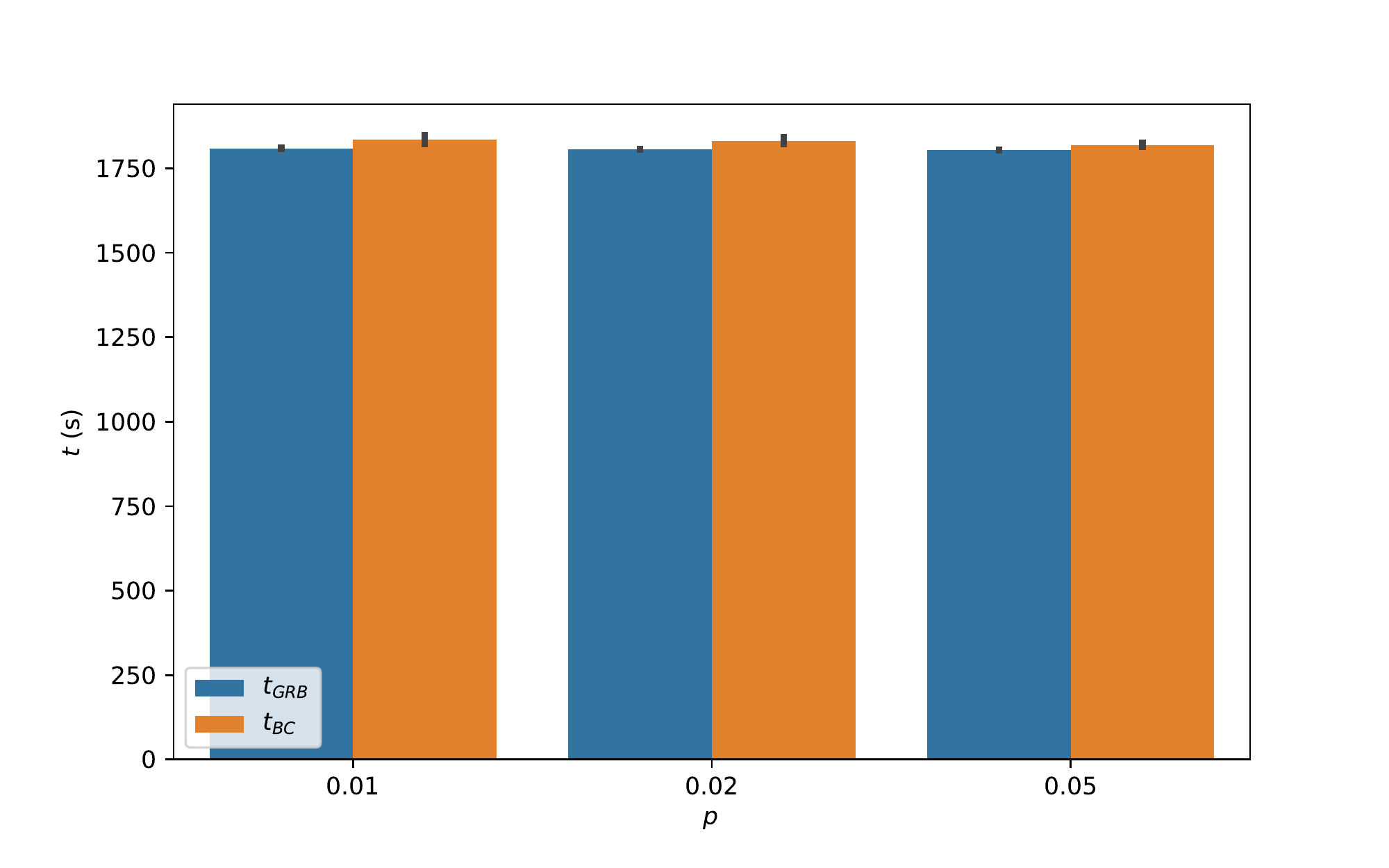}
    \caption{Total CPU time(s) for \msi.}
    \label{fig:tnegp}
\end{figure}

These numbers are very similar to the case of \nni: the heuristic itself takes very little time and most of the time is consumed by the non-convex solver.

\subsection{Overall Evaluation.}

The results for both \nni~and \msi~indicate that with minimal consumption of cpu time, the heuristic we propose is capable of separating very good lifted bilinear cover cuts, thus leading to a stable and robust performance improvement  across instances, as measured in the relative gap closed $\rho$ as well as relative gap improvement $\Delta\rho$. 
Such improvement in performance is especially noticeable when the instances are sparse.

Overall, we believe the strength and potential of the lifted bilinear cover cuts were demonstrated by our experiments, which were most conclusive for sparser problems.

\section{Conclusions and Future Directions.}\label{sec:conc}

In this paper, we designed a simple heuristic to separate the lifted bilinear cover cuts introduced in~\cite{gu2022lifting}.  In the computational experiments, the cuts demonstrate major and robust improvement for sparse problems, illustrating the potential of these cuts in situations were SDP-based formulations might not be as helpful.

We envision multiple future directions that will better uncover and utilize the potential of the lifted bilinear cover inequalities, as well as achieving better results:
\begin{itemize}
    \item 
    {\bf Complexity of separation:}
    We developed a heuristic for separation since we could not design a polynomial-time exact separation routine. 
    We conjecture that the separation problem is NP-hard although proving this result appears nontrivial.  
    \item 
    {\bf Improving the heuristic for cutting planes:}    While the heuristic we developed is quite simple, it uses randomness. 
    It would be interesting to devise a deterministic heuristic and explore other more sophisticated variants of the heuristic. Another direction is to consider separating cuts from aggregations of constraints, an approach that has proven to be useful for MILPs (see \cite{marchand2001aggregation,bodur2018aggregation}) and for QCQPs (see \cite{yildiran2009convex,burer2017convexify,modaresi2017convex,dey2022obtaining}).
    \item 
    {\bf Integrating the bilinear cover cuts within the solver:} 
    The procedure described in the paper only applies lifted bilinear cover cuts at the root node, and its potential cannot be fully realized without integration within a global solver. 
    It is clear that the method could be applied at children nodes during the branch-and-bound process; evaluating the benefits this provides is an open question.
    \item 
    {\bf Other problem types:} 
    It is easy to see that any quadratic constraint can be relaxed (at the expense of an increase in the number of variables) into a separable bilinear constraint. 
    An interesting direction of research is to determine if adding these lifted bilinear cover cuts to such relaxation is useful in the solution of nonconvex QCQPs.
\end{itemize}

\section{Acknowledgments}
Santanu S. Dey gratefully acknowledges the support by ONR under grant N000141912323.

\bibliographystyle{plain}
\bibliography{mybibliography}
\end{document}